\newtheorem{Satz}{Theorem} 
\newtheorem{Lemm}{Lemma} 
\newtheorem{Def}{Definition} 
\newcommand \Id  {\operatorname{Id}}
\newcommand \R   {\mathbb{R}}
\newcommand \C   {\mathbb{C}}
\newcommand \K   {\mathcal{K}}
\newcommand \Kinf{\mathcal{K_\infty}}
\newcommand \KL  {\mathcal{KL}}
\newcommand \LL  {\mathcal{L}}
\newcommand \T   {\mathcal{T}}
\newcommand \Iff   {\Leftrightarrow}
\newcommand{\lel}{\left\langle}
\newcommand{\rir}{\right\rangle}
\newcommand \UGx  {0-UGAS$x$}
\newcommand \eps {\varepsilon}
 \journalname{Mathematics of Control, Signals, and Systems}
\begin{document}

\title{Input-to-state stability of infinite-dimensional control systems
}

\titlerunning{ISS of infinite-dimensional systems}        


\author{Sergey Dashkovskiy     \and      Andrii Mironchenko }



\institute{Sergey Dashkovskiy \at
							Department of Civil Engineering, University of Applied Sciences Erfurt, Erfurt, Germany\\
              \email{sergey.dashkovskiy@fh-erfurt.de}            
           \and
           Andrii Mironchenko (corresponding author) \at
              Faculty of Mathematics and Computer Science, University of Bremen, Bibliothekstra\ss e 1, 28334 Bremen, Germany \\
              Tel.: +49-421-21863825\\
              Fax: +49-421-2189863825\\
              \email{andmir@math.uni-bremen.de}           
}

\date{Received: date / Accepted: date}

\maketitle

\begin{abstract}
We develop tools for investigation of input-to-state stability (ISS) of infinite-dimensional control systems.
We show that for certain classes of admissible inputs the existence of an ISS-Lyapunov function implies the input-to-state stability of a system.
Then for the case of systems described by abstract equations in Banach spaces we develop two methods of construction of local and global ISS-Lyapunov functions.
We prove a linearization principle that allows a construction of a  local ISS-Lyapunov function for a system which linear approximation is ISS.
In order to study interconnections of nonlinear infinite-dimensional systems, we generalize the small-gain theorem to the case of infinite-dimensional systems and provide a way to construct an ISS-Lyapunov function for an entire interconnection, if ISS-Lyapunov functions for subsystems are known and the small-gain condition is satisfied. We illustrate the theory on examples of linear and semilinear reaction-diffusion equations.
\keywords{
nonlinear control systems\and infinite-dimensional systems\and input-to-state stability \and Lyapunov methods \and linearization
}
\end{abstract}



\section{Introduction}

The concept of input-to-state stability (ISS) introduced in \cite{Son89} is widely used to study stability properties of control systems with respect to external inputs. 
Within last two decades different methods for verification of the input-to-state stability of finite-dimensional systems were developed. For a survey of recent results in the ISS theory see \cite{Son08} and \cite{DES11}, the relation of ISS and circle criterion can be found in \cite{JLR11}.
In particular it is known that the method of Lyapunov functions together with small-gain theorems (see \cite{JMW96}, \cite{DRW07}, \cite{DRW10}, \cite{KaJ11}) provides us with rich tools to investigate input-to-state stability of control systems.

To input-to-state stability of infinite-dimensional systems, with an exception of time-delay systems (see, e.g. \cite{PeJ06}, \cite{KaJ11}), less attention was devoted. 

In \cite{MaP11} ISS of certain classes of semilinear parabolic equations has been studied with the help of strict Lyapunov functions. In \cite{PrM12} a construction of ISS-Lyapunov functions for certain time-variant linear systems of hyperbolic equations (balance laws) has been provided.
In \cite{DaM10} it was shown, that for certain classes of monotone reaction-diffusion systems ISS of the system with diffusion follows from the ISS of its local dynamics (i.e. of a system without diffusion). 

Other results have been obtained for general control systems via vector Lyapunov functions. 
In \cite{KaJ11} a general vector Lyapunov small-gain theorem for abstract control systems satisfying weak semigroup property (see also \cite{Kar07}, \cite{KaJ11b}) has been proved. For this class of systems in \cite{KaJ10} the trajectory-based small-gain results have been obtained and applied to a chemostat model.

In \cite{JLR08} the results on relations between circle-criterion and ISS for systems, based on equations in Banach spaces, have been proved.

Our guideline in this paper is a development of Lyapunov-type sufficient conditions for ISS of the
infinite-dimensional systems and elaboration of methods for construction of ISS-Lyapunov functions.

Our first main result is that for abstract control systems under certain assumptions on the class of input functions from the existence of a (local or global) ISS-Lyapunov function it follows (local or global) ISS of the system. We show that our definition of the local ISS-Lyapunov function is consistent with the standard definition of local ISS-Lyapunov function for finite-dimensional systems.

In the next part of the paper we exploit semigroup theory methods and consider infinite-dimensional systems generated by differential equations in abstract spaces.
For such systems we develop two methods for construction of ISS-Lyapunov functions for  control systems.

To study interconnections of $n$ ISS subsystems, we generalize small-gain theorem for finite-dimensional systems \cite{DRW06b}, \cite{DRW10} to the infinite-dimensional case.
This theorem allows a construction of a ISS-Lyapunov function for the whole interconnection if ISS-Lyapunov functions for subsystems are known and the small-gain condition is satisfied. The ISS of the interconnection follows then from the existence of an ISS-Lyapunov function for it.

The local ISS of nonlinear control systems can be investigated in an analogous way (see, e.g.,  \cite{DaR10}), but also another type of results is possible, namely linearization technique, well-known for infinite-dimensional dynamical systems (without inputs) \cite{Hen81}. 
We prove, that a system is LISS provided its linearization is ISS in two ways.  The first proof holds for systems with a Banach state space, but it doesn't provide a LISS-Lyapunov function. Another proof is based on a converse Lyapunov theorem and provides a LISS-Lyapunov function, but needs that the state space is Hilbert. 

Throughout the paper we use either classical solutions of partial differential equations, or the solutions in the Sobolev spaces. Other function spaces can be also exploited, see e.g. \cite{Dac03}.

The outline of the work is as follows: in Section \ref{Prelim} we introduce notation and basic notions. In Section \ref{SectLinSys} we discuss ISS for linear systems. Afterwards the method of ISS-Lyapunov functions is extended to the abstract control systems and the results are applied to certain nonlinear reaction-diffusion equation.
In Section \ref{Linearisierung} we prove a linearization principle. 
Next, in Section \ref{GekoppelteISS_Systeme} we prove a small-gain theorem and apply it to certain linear and nonlinear systems.

\section{Preliminaries}
\label{Prelim}

Throughout the paper let $(X,\|\cdot\|_X)$ and $(U,\|\cdot\|_U)$ be the state space and the space of input values, endowed with norms $\|\cdot\|_X$  and $\|\cdot\|_U$ respectively. 

For linear normed spaces $X,Y$ let $L(X,Y)$ be the spaces of bounded linear operators from $X$ to $Y$ and $L(X):=L(X,X)$. A norm in these spaces we denote by $\| \cdot \|$. 

By $C(X,Y)$ we denote the space of continuous functions from $X$ to $Y$, $C(X):=C(X,X)$ and by 
$PC(X,Y)$ the space of piecewise right-continuous functions from $X$ to $Y$. Both are equipped with the standard $\sup$-norm.

Let $\R_+:=[0,\infty)$. 
We will use throughout the paper the following function spaces: 
\begin{itemize}
  \item $C_0^k(0,d)$ is a space of $k$ times continuously differentiable functions \\
	$f:(0,d) \to \R$ with a support, compact in $(0,d)$.
	\item $L_p(0,d)$, $p \geq 1$ is a space of $p$-th power integrable functions $f:(0,d) \to \R$ with the norm $\|f\|_{L_p(0,d)}=\left( \int_0^d{|f(x)|^p dx} \right)^{\frac{1}{p}}$.
	\item $W^{p,k}(0,d)$ is a Sobolev space of functions $f \in L_p(0,d)$, which have weak derivatives of order $\leq k$, all of which belong to $L_p(0,d)$.
	Norm in $W^{p,k}(0,d)$ is defined by $\|f\|_{W^{p,k}(0,d)}=\left( \int_0^d{\sum_{1 \leq s \leq k}\left|\frac{\partial^s f}{\partial x^s}(x)\right|^p dx} \right)^{\frac{1}{p}}$.
	\item $W^{p,k}_0(0,d)$ is a closure of $C_0^k(0,d)$ in the norm of $W^{p,k}(0,d)$.
	\item $H^k(0,d)=W^{2,k}(0,d)$,  $H^k_0(0,d)=W^{2,k}_0(0,d)$.
\end{itemize}

We use the following axiomatic definition of a control system
\begin{Def}
\label{Steurungssystem}
The triple $\Sigma=(X,U_c,\phi)$, consisting of the state space $X$, the space of admissible input functions $U_c \subset \{f:\R_+ \to U\}$, both of which are linear normed spaces, equipped with norms $\|\cdot\|_{X}$ and $\|\cdot\|_{U_c}$ respectively  and of a transition map $\phi:A_{\phi} \to X$, $A_{\phi} \subset \R_+ \times \R_+ \times X \times U_c$ is called a control system, if the following properties hold:
\begin{itemize}
	\item Existence: For every $(t_0,\phi_0,u) \in \R_+ \times X \times U_c$ there exists $t>t_0$: $[t_0,t] \times \{(t_0,\phi_0,u)\} \subset A_{\phi}$.
	\item Identity property: For every $(t_0,\phi_0) \in \R_+ \times X$ it holds $\phi(t_0,t_0, \phi_0,\cdot)~=~\phi_0$.
	\item Causality: For every $(t,t_0,\phi_0,u) \in A_{\phi}$, for every $\tilde{u} \in U_c$, such that $u(s) = \tilde{u}(s)$, $s \in [t_0,t]$ it holds $(t,t_0,\phi_0,\tilde{u}) \in A_{\phi}$ and $\phi(t,t_0,\phi_0,u) \equiv \phi(t,t_0,\phi_0,\tilde{u})$.
	\item Continuity: for each $(t_0,\phi_0,u) \in \R_+ \times X \times U_c$ the map $t \to \phi(t,t_0,\phi_0,u)$ is continuous.
	\item Semigroup property: for all $t \geq s \geq 0$, for all $\phi_0 \in X$, $u \in U_c$ so that $(t,s,\phi_0,u) \in A_{\phi}$, it follows
\begin{itemize}
	\item $(r,s,\phi_0,u) \in A_{\phi}$, $r \in [s,t]$,
	\item for all $r \in [s,t]$ it holds $\phi(t,r,\phi(r,s,x,u),u)=\phi(t,s,x,u)$.
\end{itemize}
%
	%
\end{itemize}
\end{Def}
Here $\phi(t,s,x,u)$ denotes the state of a system at the moment $t \in \R_+$, if its state at the moment $s \in \R_+$ was $x \in X$ and the input $u \in U_c$ was applied.

This definition is adopted from \cite{KaJ11}, but we specialize it to the systems, which satisfy classical semigroup property. Another axiomatic definitions of control systems are also used in the literature (see \cite{Son98a}, \cite{Wil72}).

We assume throughout the paper, that for control systems a BIC property (Boundedness-Implies-Continuation property) holds (see \cite[p. 4]{KaJ11b}, \cite{KaJ11}): for all
$(t_0,x_0,u) \in \R_+ \times X \times U_c$ there exist maximal time of existence of the solution $t_{m} \in (t_0,\infty]$, such that $[t_0,t_m) \times \{(t_0,x_0,u)\} \subset A_{\phi}$ and for all $t \geq t_m$ $(t,t_0,x_0,u) \notin A_{\phi}$. Moreover, if $t_m < \infty$, then for every $M>0$ there exists $t \in [t_0,t_m)$: $\|\phi(t,t_0,x,u)\|_X>M$.

In other words, BIC property states that a solution may stop to exist in finite time only because of blow-up phenomena, when the norm of a solution goes to infinity in finite time.
As examples in this paper we use the parabolic systems, for which the BIC property holds, because of smoothing action of parabolic systems, see \cite{Hen81}.

In this paper we consider time-invariant systems.
Time-invariance means, that the future evolution of a system depends only on the initial state of the system and on the applied input, but not on the initial time. 
For time-invariant systems we can without restriction assume that initial time $t_0:=0$. We denote for short $\phi(t,\phi_0,u):=\phi(t,0,\phi_0,u)$.


\begin{Def}
For the formulation of stability properties the following classes of functions are useful:
\begin{equation*}
\begin{array}{ll}
{\K} &:= \left\{\gamma:\R_+ \to \R_+ \left|\ \right. \gamma\mbox{ is continuous and strictly increasing, }\gamma(0)=0\right\}\\
{\K_{\infty}}&:=\left\{\gamma\in\K\left|\ \gamma\mbox{ is unbounded}\right.\right\}\\
{\LL}&:=\left\{\gamma:\R_+ \to \R_+ \left|\ \gamma\mbox{ is continuous and decreasing with}\right.
 \lim\limits_{t\rightarrow\infty}\gamma(t)=0 \right\}\\
{\KL} &:= \left\{\beta: \R_+^2 \to \R_+ \left|\ \right. \beta(\cdot,t)\in{\K},\ \forall t \geq 0,\  \beta(r,\cdot)\in {\LL},\ \forall r >0\right\}
\end{array}
\end{equation*}
\end{Def}


\begin{Def}
$\Sigma$ is {\it globally asymptotically
stable at zero} uniformly with respect to $x$ (0-UGAS$x$), if $\exists \beta \in \KL$, such that $\forall \phi_0 \in X$, $\forall t\geq 0$ it holds
\begin{equation}
\label{UniStabAbschaetzung}
\left\| \phi(t,\phi_0,0) \right\|_{X} \leq  \beta(\left\| \phi_0 \right\|_{X},t) .
\end{equation}

If $\beta$ can be chosen as $\beta(r,t)=Me^{-at} r $ $\forall r,t \in \R_+$, for some $a,M>0$, then 
$\Sigma$ is called exponentially 0-UGAS$x$.
\end{Def}

The notion 0-UGAS$x$ is also called uniform asymptotic stability in the whole (see \cite[p. 174]{Hah67}).

We need also another notion:
\begin{Def}
$\Sigma$ is {\it globally asymptotically
stable at zero} (0-GAS), if it holds
\begin{enumerate}
	\item $\forall \varepsilon>0 \ \exists \delta>0: \|x\|_X < \delta,\ t\geq 0 \Rightarrow \|\phi(t,x,0)\|_X < \varepsilon$,
	\item  $\forall x \in X$ $\| \phi(t,x,0)\|_X \to 0,\ t \to \infty$.
\end{enumerate}
\end{Def}

In other words, $\Sigma$ is 0-GAS if it is locally stable and globally attractive (see, e.g. \cite{SoW96}).
Note, that the 0-UGAS$x$ property is not equivalent to the 0-GAS in general (\cite[§ 36]{Hah67}, see also Section \ref{Gegenbeispiel}).

\begin{Def}
Element of state space $\phi_0 \in X$ is called an equilibrium point of control system $\Sigma$ if $\phi(t,\phi_0,0) = \phi_0$, for all $t\geq 0$.
\end{Def}

To study stability properties of control systems with respect to external inputs, we introduce the following notion
\begin{Def}
$\Sigma$ is called {\it locally input-to-state stable}
(LISS), if $\exists \rho_x, \rho_u>0$ and  $\exists \beta \in \KL$ and $\gamma \in \K$, such that
the inequality
\begin {equation}
\label{iss_sum}
\begin {array} {lll}
\| \phi(t,\phi_0,u) \|_{X} \leq \beta(\| \phi_0 \|_{X},t) + \gamma( \|u\|_{U_c})
\end {array}
\end{equation}
holds $\forall \phi_0: \|\phi_0\|_X \leq \rho_x$,  
$\forall t\geq 0$ and
$\forall u\in U_c$: $\|u\|_{U_c} \leq \rho_u$.

If $\beta$ can be chosen as $\beta(r,t)=Me^{-at} r $ $\forall r,t \in \R_+$, for some $a,M>0$, then $\Sigma$ is called exponentially LISS (eLISS).

The control system is called {\it input-to-state stable}
(ISS), if in the above definition $\rho_x$ and $\rho_u$ can be chosen equal to $\infty$.

If $\Sigma$ is ISS and $\beta$ can be chosen as $\beta(r,t)=Me^{-at} r $ $\forall r,t \in \R_+$, for some $a,M>0$, then $\Sigma$ is called exponentially ISS (eISS).
\end{Def}

One of the most common choices for $U_c$ is the space $U_c:=PC(\R_+,U)$ with the norm $\| \cdot \|_{U_c} := \sup\limits_{0 \leq s \leq \infty} \|u(s)\|_U$.
In this case one can use the alternative definition of the ISS property, which is often used in the literature (see, e.g. \cite{KaJ11}, \cite{HLT08}):
\begin{proposition}
Let $U_c:=PC(\R_+,U)$. Then $\Sigma$ is LISS iff 
$\exists \rho_x, \rho_u>0$ and  $\exists \beta \in \KL$ and $\gamma \in \K$, such that
the inequality
\begin {equation}
\label{iss_sum_equiv}
\begin {array} {lll}
\| \phi(t,\phi_0,u) \|_{X} \leq \beta(\| \phi_0 \|_{X},t) + \gamma( \sup\limits_{0 \leq s \leq t} \|u(s)\|_U)
\end {array}
\end{equation}
holds $\forall \phi_0: \|\phi_0\|_X \leq \rho_x$,  
$\forall t\geq 0$ and
$\forall u\in U_c$: $\|u\|_{U_c} \leq \rho_u$.
\end{proposition}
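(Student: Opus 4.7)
The plan is to prove the two implications separately. The nontrivial direction is \eqref{iss_sum} $\Rightarrow$ \eqref{iss_sum_equiv}, which relies on the causality axiom together with a simple truncation of the input.

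For the easy direction, suppose \eqref{iss_sum_equiv} holds. Since $\|u\|_{U_c} = \sup_{s \geq 0}\|u(s)\|_U \geq \sup_{0 \leq s \leq t}\|u(s)\|_U$ and $\gamma \in \K$ is monotone, the estimate \eqref{iss_sum} follows immediately from \eqref{iss_sum_equiv}, on the same balls $\|\phi_0\|_X \leq \rho_x$, $\|u\|_{U_c} \leq \rho_u$.

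For the converse, assume \eqref{iss_sum} holds and fix $\phi_0$ with $\|\phi_0\|_X \leq \rho_x$, $u \in U_c$ with $\|u\|_{U_c} \leq \rho_u$ and $t \geq 0$. The idea is to replace $u$ by an input $\tilde u$ that agrees with $u$ on $[0,t]$ but whose full $U_c$-norm is already attained on this interval. Concretely, set $\tilde u(s) := u(\min(s,t))$. On $[0,t]$ this coincides with $u$, while on $(t,\infty)$ it is constant equal to $u(t)$; in particular $\tilde u$ inherits piecewise right-continuity from $u$ (the only possibly new jump occurs at $s=t$, but there $\tilde u$ is right-continuous with value $u(t)$), so $\tilde u \in PC(\R_+, U)$. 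Moreover
\begin{equation*}
\|\tilde u\|_{U_c} \;=\; \sup_{s \geq 0}\|\tilde u(s)\|_U \;=\; \sup_{0 \leq s \leq t}\|u(s)\|_U \;\leq\; \|u\|_{U_c} \;\leq\; \rho_u,
\end{equation*}
so the LISS hypothesis \eqref{iss_sum} is applicable to $\tilde u$.

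Now invoke causality: since $u(s) = \tilde u(s)$ for every $s \in [0,t]$, the control system axioms give $\phi(t,\phi_0,u) = \phi(t,\phi_0,\tilde u)$. Plugging $\tilde u$ into \eqref{iss_sum} and using the computation of $\|\tilde u\|_{U_c}$ above yields \eqref{iss_sum_equiv}. No genuine obstacle is expected; the only point demanding care is checking that the truncated input $\tilde u$ really lies in $PC(\R_+,U)$ (so that the causality axiom is applicable), which is precisely why the freeze-at-$t$ construction $s \mapsto u(\min(s,t))$ is preferable to the naive cutoff $u \cdot \mathbf{1}_{[0,t]}$, the latter generally violating right-continuity at $t$.
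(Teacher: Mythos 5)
Your proposal is correct and follows essentially the same route as the paper: the easy direction via monotonicity of $\gamma$, and the converse by freezing the input at time $t$ (the paper's $\tilde u$ is exactly your $s \mapsto u(\min(s,t))$) and invoking causality. Your extra remark verifying that the truncated input stays in $PC(\R_+,U)$ is a useful bit of care the paper leaves implicit.
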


\begin{proof}
Sufficiency is clear, since $\sup\limits_{0 \leq s \leq t} \|u(s)\|_U \leq \sup\limits_{0 \leq s \leq \infty} \|u(s)\|_U=\|u\|_{U_c}$.

Now let $\Sigma$ be LISS. Due to causality property of $\Sigma$ the state $\phi(\tau,\phi_0,u)$, $\tau \in [0,t]$ of the system $\Sigma$ does not depend on the values of $u(s)$, $s >t$. For arbitrary $t \geq 0$, $\phi_0 \in X$ and $u \in U_c$ consider another input $\tilde{u} \in U_c$, defined by
\[
\tilde{u}(\tau):= 
\left\{ 
\begin{array}{l}
{u(\tau),\tau \in [0,t],} \\
{u(t), \tau >t.}
\end{array}
\right.
\]
The inequality \eqref{iss_sum} holds for all admissible inputs, and hence it holds also for $\tilde{u}$. Substituting $\tilde{u}$ into \eqref{iss_sum} and using that 
$\|\tilde{u}\|_{U_c}= \sup\limits_{0 \leq s \leq t} \|u(s)\|_U$, we obtain \eqref{iss_sum_equiv}. \hfill $\blacksquare$
\end{proof}

The similar property (with $ \mathop{\text{ess} \sup}\limits_{0 \leq s \leq t} \|u(s)\|_U$ instead of $\sup\limits_{0 \leq s \leq t} \|u(s)\|_U$ ) holds for continuous input functions ($U_c:=C(\R_+,U)$), for the class of strongly measurable and essentially bounded inputs $U_c:=L_{\infty}(\R_+,U)$ (which is the standard choice in the case of finite-dimensional systems) and many other classes of input functions.


\section{Linear systems}
\label{SectLinSys}

%

Let $X$ be a Banach space and $\T=\{T(t),\ t \geq 0\}$ be a $C_0$-semigroup on $X$
with an infinitesimal generator $A=\lim\limits_{t \rightarrow +0}{\frac{1}{t}(T(t)x-x)}$.

Consider a linear control system with inputs of the form
\begin{equation}
\label{LinSys}
\begin{array}{l}
{\dot{s}=As+f(u(t)),} \\
s(0)=s_0,
\end{array}
\end{equation}
where $f:U \to X$ is continuous and so that for some $\gamma \in \K$ it holds
\begin{equation}
\label{F_Abschaetzung}
\|f(u)\|_X \leq \gamma(\|u\|_U), \ \forall u \in U.
\end{equation}


We consider weak solutions of the problem \eqref{LinSys}, which are solutions of integral equation, obtained from \eqref{LinSys} by variation of constants formula
\begin{equation}
\label{LinEq_IntegralForm}
s(t)=T(t)s_0 + \int_0^t{T(t-r)f(u(r))dr},
\end{equation}
where $s_0 \in X$.

The space of admissible inputs $U_c$ can be chosen as an arbitrary subspace of a space of strongly measurable functions $f:[0,\infty) \to U$, such that for all $u \in U_c$ the integral in \eqref{LinEq_IntegralForm} exists in the sense of Bochner. If we define $\phi(t,s_0,u):=s(t)$ by the formula \eqref{LinEq_IntegralForm}, we obtain that $(X,U_c,\phi)$ is a control system according to Definition~\ref{Steurungssystem}.

For examples in this section we will use $U_c:=C([0,\infty),U)$.
In this case functions under the sign of integration in \eqref{LinEq_IntegralForm} are strongly measurable (since they are continuous, see \cite[p. 84]{HiP96}) and for all $t \geq 0$
\[
\int_0^t{\|T(t-r)f(u(r))\|_X dr}~<~\infty.
\]
Thus according to the criterion of Bochner integrability (see \cite[Theorem 3.7.4.]{HiP96}), 
integral in \eqref{LinEq_IntegralForm} is well-defined in the sense of Bochner.

The following fact is well-known
\begin{proposition}
\label{Finite_LinGAS_ISS}
For finite-dimensional systems ($X = \R^n$) the following properties of the system \eqref{LinSys} are equivalent: e0-GAS, eISS, 0-GAS, ISS.
\end{proposition}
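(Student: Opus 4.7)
The plan is to close a short chain of implications: eISS $\Rightarrow$ ISS $\Rightarrow$ 0-GAS $\Rightarrow$ e0-GAS $\Rightarrow$ eISS. The first two arrows are immediate from the definitions: eISS is a special case of ISS, and ISS with $u \equiv 0$ forces a $\KL$-estimate on $\|\phi(t,\phi_0,0)\|_X$, which in particular yields both local stability at zero and global attractivity, i.e.\ 0-GAS.

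The implication 0-GAS $\Rightarrow$ e0-GAS is the classical fact for linear ODEs on $\R^n$: asymptotic stability of the origin of $\dot{s}=As$ is equivalent to the spectrum $\sigma(A)$ lying in the open left half-plane, hence to the existence of constants $M,a>0$ with $\|T(t)\|=\|e^{At}\| \le M e^{-at}$ for all $t\ge 0$. I would just cite this from standard linear systems theory.

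For the crucial step e0-GAS $\Rightarrow$ eISS I would plug $\|T(t)\|\le Me^{-at}$ into the variation-of-constants representation \eqref{LinEq_IntegralForm}, use \eqref{F_Abschaetzung} and the monotonicity of $\gamma\in\K$ together with $\|u(r)\|_U \le \|u\|_{U_c}$ (valid for the sup-type norm on $U_c=C([0,\infty),U)$), to get
\begin{equation*}
\|s(t)\|_X \le M e^{-at}\|s_0\|_X + \int_0^t M e^{-a(t-r)}\gamma(\|u(r)\|_U)\,dr \le M e^{-at}\|s_0\|_X + \tfrac{M}{a}\gamma(\|u\|_{U_c}),
\end{equation*}
which is exactly the eISS estimate with $\beta(r,t)=Me^{-at}r$ and $\tilde{\gamma}=\tfrac{M}{a}\gamma \in \K$, and $\rho_x=\rho_u=\infty$.

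No step is really a serious obstacle: the only delicate point is that the admissible input space must be such that $\|u(r)\|_U$ can be majorized by a single number depending on $u$ (the sup or essential sup), so that the $\gamma$-term can be pulled outside the integral; this is exactly the setting noted in the paper right after \eqref{iss_sum_equiv}, so I would simply invoke that the argument goes through identically for $U_c=PC(\R_+,U)$, $C(\R_+,U)$, or $L_\infty(\R_+,U)$, which closes the cycle of equivalences.
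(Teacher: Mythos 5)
Your proof is correct and is essentially the argument the paper uses: the paper declares the proposition ``well-known'' and later obtains it as the finite-dimensional special case of Proposition~\ref{HauptLinSatz} (using that $0$-GAS coincides with $0$-UGAS$x$ for linear systems on $\R^n$), and the decisive computation there --- plugging the exponential bound $\|T(t)\|\le Me^{-at}$ into the variation-of-constants formula \eqref{LinEq_IntegralForm} and pulling $\gamma(\|u\|_{U_c})$ out of the integral --- is exactly your estimate. Your only deviation is replacing the paper's semigroup lemma (Lemma~\ref{HilfsAequivalenzen}) by the elementary spectral criterion for $\dot s = As$ on $\R^n$, which is the appropriate finite-dimensional substitute.
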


We are going to obtain a counterpart of this proposition for infinite-dimensional systems. We need the following lemma:
\begin{Lemm}
\label{HilfsAequivalenzen}
The following statements are equivalent:
\begin{enumerate}
	\item \eqref{LinSys} is {\UGx}.
	\item $\T$ is uniformly stable (that is, $\|T(t)\| \to 0,\ t \to \infty$).
	\item $\T$ is uniformly exponentially stable ($\|T(t)\| \leq M e^{-\omega t}$ for some $M,\omega >0$ and all $t \geq 0$).	
	\item \eqref{LinSys} is exponentially {\UGx}.
\end{enumerate}
\end{Lemm}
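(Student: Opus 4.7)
The plan is to prove the cycle $(1) \Rightarrow (2) \Rightarrow (3) \Rightarrow (4) \Rightarrow (1)$. First I would observe that the zero-input dynamics are just $\phi(t,s_0,0) = T(t)s_0$: the bound \eqref{F_Abschaetzung} with $\gamma \in \K$ forces $f(0)=0$, so the Bochner integral in \eqref{LinEq_IntegralForm} vanishes when $u \equiv 0$. Hence every statement in the lemma is really a statement about the semigroup $\T$.

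For $(1) \Rightarrow (2)$: if \eqref{LinSys} is {\UGx} with comparison function $\beta \in \KL$, then for any $s_0$ with $\|s_0\|_X \le 1$ we have $\|T(t)s_0\|_X \le \beta(1,t)$. Taking the supremum over the closed unit ball yields $\|T(t)\| \le \beta(1,t) \to 0$ as $t \to \infty$, which is uniform stability of $\T$.

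For $(2) \Rightarrow (3)$: this is the standard semigroup-theoretic step. Since $\|T(t)\| \to 0$, there is some $t_0 > 0$ with $q := \|T(t_0)\| < 1$. For arbitrary $t \ge 0$ write $t = n t_0 + r$ with $n \in \N$ and $r \in [0,t_0)$; the semigroup property gives $\|T(t)\| \le \|T(t_0)\|^n\, \|T(r)\| \le M_0 q^n$, where $M_0 := \sup_{r \in [0,t_0]} \|T(r)\| < \infty$ by the well-known local boundedness of $C_0$-semigroups (a consequence of the uniform boundedness principle). Choosing $\omega := -\tfrac{1}{t_0}\ln q > 0$ and adjusting the constant gives $\|T(t)\| \le M e^{-\omega t}$. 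Finally, $(3) \Rightarrow (4)$ follows directly by $\|\phi(t,s_0,0)\|_X = \|T(t)s_0\|_X \le M e^{-\omega t}\|s_0\|_X$, and $(4) \Rightarrow (1)$ is obvious since exponential {\UGx} is a special case of {\UGx} with $\beta(r,t) = M e^{-\omega t} r$.

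The only step that is not essentially a one-liner is $(2) \Rightarrow (3)$, and this is also the place where the infinite-dimensional setting matters: one cannot argue by spectral decomposition as in the finite-dimensional Proposition~\ref{Finite_LinGAS_ISS}, and the proof crucially depends on the local boundedness of $\|T(\cdot)\|$ on $[0,t_0]$ provided by the $C_0$-semigroup assumption. I would either invoke the standard theorem directly (e.g.\ from Pazy or Engel--Nagel) or include the short argument sketched above.
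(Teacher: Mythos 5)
Your proof is correct and follows essentially the same route as the paper: reduce every statement to one about the semigroup $\T$ via $\phi(t,s_0,0)=T(t)s_0$, identify 0-UGAS$x$ with uniform stability of $\T$, and invoke the classical equivalence of uniform stability and uniform exponential stability. The only differences are organizational — you close a single implication cycle where the paper proves pairwise equivalences, and you spell out the standard $t=nt_0+r$ argument for $(2)\Rightarrow(3)$ where the paper simply cites Engel--Nagel.
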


\begin{proof}
1 $\Iff$ 2. 
At first note that for an input-to-state stable system \eqref{LinSys} $\KL$-function $\beta$ can be always chosen as $\beta(r,t)=\zeta(t) r$ for some $\zeta \in \LL$.
Indeed, consider $x \in X:$ $\|x\|_X=1$, substitute it into \eqref{UniStabAbschaetzung} and choose $\zeta(\cdot)=\beta(1,\cdot) \in \LL$. From linearity of $\T$ we have, that $\forall x \in X$, $x \neq 0$ $\|T(t)x\|_X= \|x\|_X \cdot \|T(t)\frac{x}{\|x\|_X}\|_X \leq \zeta(t) \|x\|_X$. 

Let \eqref{LinSys} be {\UGx}. Then $\exists \zeta \in \LL$, such that 
\[
\|T(t)x\|_X \leq \beta(\|x\|_X, t)=\zeta(t) \|x\|_X  \quad \forall x \in X,\ \forall t \geq 0
\]
holds. This means, that $\|T(\cdot)\| \leq \zeta(\cdot)$, and, consequently, $\T$ is uniformly stable.

If $\T$ is uniformly stable, then it follows, that $\exists \zeta \in \LL$: $\|T(\cdot)\| \leq \zeta(\cdot)$. Then $\forall x \in X$ $\|T(t)x\|_X \leq \zeta(t) \|x\|_X$.

Equivalence 2 $\Iff$ 3 is well-known (see \cite[Proposition 1.2, p. 296]{EnN00}).

3 $\Iff$ 4. Follows from the fact that
for some $M,\omega >0$ it holds that $\|T(t)x\| \leq M e^{-\omega t} \|x\|_X$ $\forall x \in X$  $\Iff$
$\|T(t)\| \leq M e^{-\omega t}$ for some $M,\omega >0$. \hfill $\blacksquare$
\end{proof}


%
%

The following proposition provides us with an infinite-dimensional counterpart of Proposition~\ref{Finite_LinGAS_ISS}
\begin{proposition}
\label{HauptLinSatz}
For systems of the form \eqref{LinSys} it holds:
\[
\text{\eqref{LinSys} is e0-UGAS}x \Iff \text{\eqref{LinSys} is 0-UGAS}x \Iff \text{\eqref{LinSys} is eISS} \Iff \text{\eqref{LinSys} is ISS}. 
\]
\end{proposition}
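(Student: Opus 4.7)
The plan is to close the cycle e0-UGAS$x$ $\Rightarrow$ 0-UGAS$x$ $\Rightarrow$ eISS $\Rightarrow$ ISS $\Rightarrow$ 0-UGAS$x$. The first step is already provided by Lemma~\ref{HilfsAequivalenzen}, which gives even the equivalence e0-UGAS$x$ $\Iff$ 0-UGAS$x$ $\Iff$ uniform exponential stability of $\T$. The implication eISS $\Rightarrow$ ISS is immediate from the definition. The implication ISS $\Rightarrow$ 0-UGAS$x$ follows by setting $u \equiv 0$ in \eqref{iss_sum}: since $f(0)=0$ (which follows from \eqref{F_Abschaetzung} evaluated at $u=0$) and $\phi(t,\phi_0,0)=T(t)\phi_0$, the ISS bound collapses to \eqref{UniStabAbschaetzung}.

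The only non-trivial implication is 0-UGAS$x$ $\Rightarrow$ eISS. Using Lemma~\ref{HilfsAequivalenzen}, I would start from constants $M,\omega>0$ such that $\|T(t)\| \leq M e^{-\omega t}$ for all $t\geq 0$. Applying the triangle inequality to the mild-solution formula \eqref{LinEq_IntegralForm}, together with \eqref{F_Abschaetzung}, yields
\begin{equation*}
\|\phi(t,s_0,u)\|_X \leq M e^{-\omega t}\|s_0\|_X + M\int_0^t e^{-\omega(t-r)} \gamma(\|u(r)\|_U)\,dr.
\end{equation*}
Provided the admissible input space $U_c$ is normed so that $\|u(r)\|_U \leq \|u\|_{U_c}$ for a.e. $r$ (which holds for the standard choices $PC(\R_+,U)$, $C(\R_+,U)$ or $L_\infty(\R_+,U)$ mentioned at the end of the previous section), the monotonicity of $\gamma$ and a direct integration give
\begin{equation*}
\|\phi(t,s_0,u)\|_X \leq M e^{-\omega t}\|s_0\|_X + \tfrac{M}{\omega}\gamma(\|u\|_{U_c}),
\end{equation*}
which is precisely the eISS estimate with $\beta(r,t)=Me^{-\omega t}r$ and gain $\tfrac{M}{\omega}\gamma$.

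I do not expect a genuine obstacle here; the argument is essentially bookkeeping on top of Lemma~\ref{HilfsAequivalenzen}. The only point that deserves care is the step in which $\gamma(\|u(r)\|_U)$ is pulled out of the integral: this uses the assumption, tacit throughout the section, that the norm on $U_c$ dominates the pointwise (or essentially pointwise) $U$-norm of the input. Under this mild structural condition on $U_c$, the four properties collapse into a single one, exactly as in the finite-dimensional Proposition~\ref{Finite_LinGAS_ISS}.
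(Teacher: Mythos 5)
Your proposal is correct and follows essentially the same route as the paper: the equivalence of the two 0-UGAS$x$ notions via Lemma~\ref{HilfsAequivalenzen}, the trivial implications eISS $\Rightarrow$ ISS $\Rightarrow$ 0-UGAS$x$ (with $u\equiv 0$), and the key step 0-UGAS$x$ $\Rightarrow$ eISS via the variation-of-constants formula and the exponential bound on $\|T(t)\|$, arriving at the identical estimate $\|s(t)\|_X \leq Me^{-\omega t}\|s_0\|_X + \frac{M}{\omega}\gamma(\|u\|_{U_c})$. Your explicit remark that pulling $\gamma(\|u(r)\|_U)$ out of the integral requires the $U_c$-norm to dominate the pointwise $U$-norm is a point the paper leaves tacit, and it is a worthwhile clarification.
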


\begin{proof}
System \eqref{LinSys} is e0-UGAS$x$ $\Iff$ \eqref{LinSys} 0-UGAS$x$  by Lemma \ref{HilfsAequivalenzen}. 

Clearly, from eISS of \eqref{LinSys} it follows ISS of \eqref{LinSys}, and this implies that  \eqref{LinSys} is 0-UGAS$x$ by taking $u \equiv 0$. 
It remains to prove, that {\UGx} of \eqref{LinSys} implies eISS of  \eqref{LinSys}.

Let system \eqref{LinSys} be {\UGx}, then by Lemma \ref{HilfsAequivalenzen}, 
$\T$ is an exponentially stable $C_0$-semigroup, that is, $\exists M,w>0$, such that $\|T(t)\| \leq Me^{-w t}$ for all $t \geq 0$.
From \eqref{LinEq_IntegralForm} we have 
\[
\|s(t)\|_{X} 
             \leq M e^{-wt} \|s_0\|_X + \frac{M}{w} \gamma(\|u\|_{U_c}),
\]
 and the eISS is proved. \hfill $\blacksquare$
\end{proof}

For finite-dimensional linear systems 0-GAS is equivalent to {\UGx} and ISS to eISS, consequently, the Proposition \ref{Finite_LinGAS_ISS} is a special case of 
Proposition~\ref{HauptLinSatz}.

However, for infinite-dimensional linear systems 0-GAS and {\UGx} are not equivalent.
Moreover, 0-GAS in general does not imply bounded-input bounded-state (BIBS) property ($\forall x \in X$, $\forall u\in U_c$: $\|u\|_{U_c} \leq M$ for some $M>0$ $\Rightarrow$ $\|\phi(t,x,u)\|_X \leq R$ for some $R>0$). 
We show this by the following example (another example, which demonstrates this property, can be found in \cite[p.247]{MaP11}).

\subsection{Counterexample}
\label{Gegenbeispiel}

Let $C(\R)$ be the space of continuous functions on $\R$, and let $X=C_0(\R)$ be the Banach space of continuous functions (with sup-norm), that vanish at infinity:
\begin{eqnarray*}
C_0(\R)=\{f \in C(\R) : \forall \varepsilon >0\; \exists \text{ compact set } K_{\varepsilon} \subset \R: |f(s)|<\varepsilon \; \forall s \in \R \backslash K_{\varepsilon}\}.
\end{eqnarray*}


For a given $q \in C(\R)$ consider the multiplication semigroup $T_q$ (for the properties of these semigroups see, e.g., \cite{EnN00}), defined by 
\[
T_q(t)f=e^{tq}f \quad  \forall f \in C_0(\R),
\]
and for all $t\geq 0$ we define $e^{tq}:x \in \R \mapsto e^{tq(x)}$.

Let us take $U=X=C_0(\R)$ and choose $q$ as $q(s)=-\frac{1}{1+|s|}$. Consider the control system, given by
\begin{equation}
\label{GegenBeisp_LinSys}
\dot{x}=A_q x +u,
\end{equation}
where $A_q$ is the infinitesimal generator of $T_q$. 

Let us show, that the system \eqref{GegenBeisp_LinSys} is 0-GAS. Fix arbitrary $f \in C_0(\R)$.
We obtain
\[
\|T_q(t)f\|_{C_0(\R)} = \sup_{s \in \R}  |(T_q(t)f)(s)|= \sup_{s \in \R} e^{-t\frac{1}{1+|s|}}|f(s)| \leq \sup_{s \in \R} |f(s)| = \|f\|_{C_0(\R)}.
\]
This shows that the first axiom of 0-GAS property is satisfied.

To show the global attractivity of the system  note that $\forall \varepsilon >0$ there exists a compact set $K_{\varepsilon} \subset \R$, such that $|f(s)|<\varepsilon \; \forall s \in \R \backslash K_{\varepsilon}$.
For such $\varepsilon$ it holds, that $|(T_q(t)f)(s)|<\varepsilon \; \forall s \in \R \backslash K_{\varepsilon}$, $\forall t \geq 0$. Moreover, there exists $t({\varepsilon})$: $|(T_q(t)f)(s)|<\varepsilon$ 
for all $s \in K_{\varepsilon}$ and $t \geq t({\varepsilon})$. 
Overall, we obtain, that for each $f \in C_0(\R)$ and all $\eps >0$ there exist $t(\eps)>0$ such that $\|T_q(t)f\|_{C_0(\R)}<\varepsilon \ \forall t \geq t({\varepsilon})$. This proves, that system \eqref{GegenBeisp_LinSys} is 0-GAS.

%
%
%

Take constant w.r.t. time external input $u \in C_0(\R) $: $u(s)=a\frac{1}{\sqrt{1+|s|}}$, for some $a>0$ and all $s \in \R$. The solution of \eqref{GegenBeisp_LinSys} is given by:
\begin{eqnarray*}
x(t)(s) &=&e^{-t\frac{1}{1+|s|}}x_0 + \int_0^t{e^{-(t-r)\frac{1}{1+|s|}} \frac{a}{\sqrt{1+|s|}}dr} \\
        &=& e^{-t\frac{1}{1+|s|}}x_0 - a \sqrt{1+|s|} (e^{-t\frac{1}{1+|s|}} - 1).
\end{eqnarray*}
We make a simple estimate, substituting $s=t-1$ for $t>1$:
\[
\sup_{s \in \R} a \left|\sqrt{1+|s|} (e^{-t\frac{1}{1+|s|}} - 1) \right| \geq
a \sqrt{t} (1- e^{-1}) \to \infty, \quad t \to \infty.
\]
For all $x_0\in C_0(\R)$ holds $\|e^{-t\frac{1}{1+|s|}}x_0\|_X \to 0$, $t \to \infty$. Thus, 
$\|x(t)\|_X \to \infty$, $t \to \infty$, and the system \eqref{GegenBeisp_LinSys} possesses unbounded trajectories for arbitrary small inputs.

\subsection{Example: linear parabolic equations with Neumann boundary conditions}

In this subsection we investigate input-to-state stability of a system of parabolic equations with Neumann conditions on the boundary.

Let $G$ be a bounded domain in $\R^p$ with smooth boundary $\partial G$, and let $\Delta$ be Laplacian in $G$. 
Let also $F \in C(G \times \R^m,\R^n)$, $F(x,0) \equiv 0$.  

%

Consider a parabolic system
\begin{equation}
\label{LinKineticWithDiffusion}
\left \{\begin {array} {l}
{\frac {\partial s\left (x, t\right)} {\partial t} - \Delta{s} = R {s}+F(x,u(x,t)), \; x \in G, t>0,} \\
{ {s}\left (x, 0\right) = \phi_0 \left (x\right) , x \in G,} \\
{\left. \frac {\partial  {s}} {\partial n} \right |_{\partial G \times \R_+} =0.}
\end {array}
\right.
\end {equation}
Here $\frac {\partial } {\partial n}$ is the normal derivative, $s(x,t) \in \R^n$, $R \in \R^{n \times n}$ and $u \in C(G \times \R_+, \R^m)$ is an external input.


Let $L:C(\overline{G}) \to C(\overline{G})$,  $L=-\Delta$ with
\[
D(L)=\left\{f \in C^2(G) \cap C^1(\overline{G}): L f \in C(\overline{G}),\  {\left. \frac {\partial  {f}} {\partial n} \right |_{\partial G} =0}  \right\}.
\]


Define the diagonal operator matrix $A={diag(-L,\ldots,-L)}$ with $-L$ as diagonal elements and $D(A)=(D(L))^n$. The closure $\overline{A}$ of $A$ is an infinitesimal generator of an analytic semigroup on $X=(C(\overline{G}))^n$.

Define the space of input values by $U:=C(\overline{G},\R^m)$ and the space of input functions by $U_c:=C(\R_+,U)$.

The problem \eqref{LinKineticWithDiffusion} may be considered as an abstract differential equation:
\[
\dot{s}=(\overline{A}+R)s + f(u(t)),
\]
\[
s(0)=\phi_0,
\]
where $u \in U_c$, $u(t)(x)=u(x,t)$ and $f:U \to X$ is defined by $f(v)(x):=F(x,v(x))$.

One can check, that the map $t \mapsto f(u(t))$ is continuous, and that
\[
\|f(u)\|_X = \sup_{x \in \overline{G}} |f(u)(x)| = \sup_{x \in \overline{G}} |F(x,u(x))| \leq
\gamma(\|u\|_U),
\]
where $\gamma(\|u\|_U):= \sup_{x \in \overline{G},y: |y| \leq \|u\|_U} |F(x,y)|$.

Consequently we have reformulated the problem \eqref{LinKineticWithDiffusion} in the form \eqref{LinSys}.
Note that $\overline{A}+R$ also generates an analytic semigroup, as a sum of infinitesimal generator of analytic semigroup $\overline{A}$ and bounded operator $R$.

Our claim is: 
\begin{proposition}
\label{ParabGleichungenProp}
System \eqref{LinKineticWithDiffusion} is eISS $\Iff$ $R$ is Hurwitz.
\end{proposition}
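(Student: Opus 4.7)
The plan is to reduce the eISS question to a pure semigroup-stability statement and then exploit the special product structure of the operator $\overline{A}+R$. By Proposition~\ref{HauptLinSatz}, eISS of \eqref{LinKineticWithDiffusion} is equivalent to 0-UGAS$x$, which by Lemma~\ref{HilfsAequivalenzen} amounts to uniform exponential stability of the $C_0$-semigroup $\{S(t)\}_{t \ge 0}$ generated by $\overline{A}+R$ on $X=(C(\overline{G}))^n$. So the task becomes: $\|S(t)\|\le M e^{-\omega t}$ for some $M,\omega>0$ if and only if $R$ is Hurwitz.

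The key structural observation is that $\overline{A}$ and $R$ commute: $\overline{A}$ is the componentwise action of $-L$ on the spatial variable, while $R\in\R^{n\times n}$ mixes components but is constant in $x$. Consequently, on a suitable core and hence on all of $X$ by closure, one has the factorization $S(t)=e^{tR}\,T_{\overline{A}}(t)=T_{\overline{A}}(t)\,e^{tR}$, where $T_{\overline{A}}(t)$ is the semigroup generated by $\overline{A}$ (acting as the Neumann heat semigroup in each coordinate) and $e^{tR}$ denotes the pointwise action of the matrix exponential. Since the Neumann heat semigroup on $C(\overline{G})$ is a contraction (maximum principle; constants are fixed), $\|T_{\overline{A}}(t)\|\le 1$ for all $t\ge 0$.

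For sufficiency, if $R$ is Hurwitz then $\|e^{tR}\|\le Me^{-\omega t}$ for some $M,\omega>0$, and the factorization gives $\|S(t)\|\le \|T_{\overline{A}}(t)\|\cdot\|e^{tR}\|\le Me^{-\omega t}$, so \eqref{LinKineticWithDiffusion} is eISS. For necessity, I would test $S(t)$ on constant-in-$x$ states: given $v\in\R^n$, let $\hat v\in X$ be defined by $\hat v(x)=v$ for all $x\in\overline{G}$. Constants lie in $D(\overline{A})$ (they are smooth, satisfy Neumann boundary conditions, and $L\hat v=0$), hence $\overline{A}\hat v=0$ and the factorization yields $(S(t)\hat v)(x)=e^{tR}v$. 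If $S(t)$ is uniformly exponentially stable, then $|e^{tR}v|=\|S(t)\hat v\|_X\le Me^{-\omega t}\|\hat v\|_X=Me^{-\omega t}|v|$ for every $v\in\R^n$, so $R$ must be Hurwitz.

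The main obstacles are of a technical nature rather than conceptual: justifying the factorization $S(t)=e^{tR}T_{\overline{A}}(t)$ rigorously (verifying the commutation on a dense core and extending to $X$), confirming the contraction estimate for the Neumann heat semigroup on the space $C(\overline{G})$ (as opposed to an $L^p$ setting), and checking that the space of constant functions indeed lies in $D(\overline{A})$ so that the necessity argument is valid. None of these pose serious difficulty, so the proof should be short.
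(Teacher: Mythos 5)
Your proof is correct and follows essentially the same route as the paper: the same reduction of eISS to exponential stability of the semigroup $S(t)$ via Proposition~\ref{HauptLinSatz} and Lemma~\ref{HilfsAequivalenzen}, the same factorization $S(t)=e^{tR}T(t)$ (which the paper obtains by the substitution $s=e^{Rt}v$), and the same necessity argument by restricting to constant-in-$x$ initial data, which reduces \eqref{LinKineticWithDiffusion} to $\dot s=Rs$. The only real difference is in the sufficiency step: the paper bounds the growth of $T(t)$ by computing the spectral bound of the Neumann Laplacian (no positive eigenvalues, via the strong maximum principle and Hopf's lemma applied to the elliptic eigenvalue problem) and then invoking the spectrum-determined growth bound for analytic semigroups, whereas you instead assert directly that the Neumann heat semigroup is a contraction on $C(\overline{G})$ by the parabolic maximum principle. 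Your version is valid (the semigroup is positive and preserves constants, hence contractive in the sup-norm) and is arguably more economical, since it yields $\|T(t)\|\leq 1$ outright rather than only a growth bound of zero; but it is a local variation within the same proof architecture rather than a different approach.
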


\begin{proof}
Denote by $S(t)$ the analytic semigroup, generated by $\overline{A} + R$.

We are going to find a simpler representation for $S(t)$. Consider \eqref{LinKineticWithDiffusion} with $u \equiv 0$. Substituting $s(x,t)=e^{Rt}v(x,t)$ in \eqref{LinKineticWithDiffusion}, we obtain a simpler problem for $v$:
\begin{equation}
\label{LinKineticWithDiffusion_Simpl}
\left \{\begin {array} {l}
{\frac {\partial v\left (x, t\right)} {\partial t} = A{v}, \; x \in G, t>0,} \\
{ {v}\left (x, 0\right) = \phi_0 \left (x\right) , x \in G,} \\
{\left. \frac {\partial  {v}} {\partial n} \right |_{\partial G \times \R_+} =0.}
\end {array}
\right.
\end {equation}

In terms of semigroups, it means: $S(t)=e^{Rt}T(t)$, where $T(t)$ is a semigroup generated by $\overline{A}$.
It is well-known (see, e.g. \cite{Hen81}), that the growth bound of analytic semigroup $T(t)$ is given by $\sup{\Re(Spec(\overline{A}))}=\sup_{\lambda \in Spec(\overline{A})} {\Re(\lambda)}$, where $\Re(z)$ is the real part of a complex number $z$. 

We are going to find an upper bound of spectrum of $\overline{A}$ in $D(A)$. Note that $Spec(A)= {Spec(-L)}$. Thus, it is enough to estimate the spectrum of $-L$ that consists of all $\lambda \in \C$, such that the following equation has a nontrivial solution
\begin{equation}
\label{Helmholz}
\left \{\begin {array} {l}
{L{s} + \lambda s =0, \; x \in G} \\
{\left. \frac {\partial  {s}} {\partial n} \right |_{\partial G} =0.}
\end {array}
\right.
\end {equation}

Let $\lambda>0$ be an eigenvalue of $L$ and $u_{\lambda} \not \equiv 0$ be the corresponding eigenfunction. 
If $u_{\lambda}$ attains its nonnegative maximum over $\overline{G}$ in some $x \in G$, then according to the strong maximum principle (see \cite{Eva98}, p. 333)  $u_{\lambda} \equiv const$ and consequently $u_{\lambda} \equiv 0$ $\Rightarrow$ $u_{\lambda}$ cannot be an eigenfunction. 
If $u_{\lambda}$ attains the nonnegative maximum over $\overline{G}$ in some $x \in \partial G$, then by Hopf's lemma (see \cite{Eva98}, p. 330),   $\frac {\partial  {u_{\lambda}}(x)} {\partial n} >0$. 
Consequently, $u_{\lambda} \leq 0$ in $\overline{G}$. But $-u_{\lambda}$ is also an eigenfunction, thus applying the same argument we obtain that $u_{\lambda} \equiv 0$ in $\overline{G}$, thus $\lambda>0$ is not an eigenvalue.  

Obviously $\lambda=0$ is an eigenvalue, therefore growth bound of $T(t)$ is $0$ and growth bound of $S(t)$ is $\omega_0=\sup\{\Re(\lambda): \exists x \neq 0: Rx=\lambda x\}$. Thus, $R$ to be Hurwitz is a sufficient condition for the system \eqref{LinKineticWithDiffusion} to be exponentially 0-UGAS$x$ and, consequently, eISS. 

It is also a necessary condition, because for constant $\phi_0$ and $u \equiv 0$ the solutions of \eqref{LinKineticWithDiffusion} are for arbitrary $x \in G$ the solutions of $\dot{s}=Rs$, and to guarantee the stability of the equilibrium $R$ has to be Hurwitz. \hfill $\blacksquare$
\end{proof}

In \eqref{LinKineticWithDiffusion} the diffusion coefficients are equal to one. In case, when the diffusion coefficients of different subsystems are not equal to each other the statement of Proposition \ref{ParabGleichungenProp} is in general not true because of Turing instability phenomenon (see \cite{Tur52}, \cite{Mur03}).


\section{Lyapunov functions for nonlinear systems}
\label{LyapFunk}

To verify both local and global input-to-state stability of nonlinear systems, Lyapunov functions can be exploited. In this section we provide basic tools and illustrate them by an example.

\begin{Def}
\label{DefLISS_LF}
A continuous function $V:D \to \R_+$, $D \subset X$, $0 \in int(D)=D \backslash \partial D$ is called a local ISS-Lyapunov function (LISS-LF) for $\Sigma$, if $\exists \rho_x,\rho_u>0$ and functions $\psi_1,\psi_2 \in \Kinf$, $\chi \in \K$ and positive definite function $\alpha$, such that:
\begin{equation}
\label{LyapFunk_1Eig}
\psi_1(\|x\|_X) \leq V(x) \leq \psi_2(\|x\|_X), \quad \forall x \in D
\end{equation}
and 
$\forall x \in X: \|x\|_X \leq \rho_x, \; \forall u\in U_c: \|u\|_{U_c} \leq \rho_u$ it holds:
\begin{equation}
\label{GainImplikation}
 \|x\|_X \geq \chi(\|u\|_{U_c}) \  \Rightarrow  \ \dot{V}_u(x) \leq -\alpha(\|x\|_X),
\end{equation}
where the Lie derivative of $V$ corresponding to the input $u$ is given by
\begin{equation}
\label{LyapAbleitung}
\dot{V}_u(x)=\mathop{\overline{\lim}} \limits_{t \rightarrow +0} {\frac{1}{t}(V(\phi(t,x,u))-V(x)) }.
\end{equation}
Function $\chi$ is called ISS-Lyapunov gain for $(X,U_c,\phi)$.

If in the previous definition $D=X$, $\rho_x=\infty$ and $\rho_u=\infty$, then the function $V$ is called ISS-Lyapunov function.
\end{Def}
Note, that in general a computation of the Lie derivative $\dot{V}_u(x)$ requires knowledge of the input on some neighborhood of the time instant $t=0$.

If the input, with respect to which the Lie derivative $\dot{V}_u(x)$ is computed, is clear from the context, then we write simply $\dot{V}(x)$.

\begin{Satz}
\label{LyapunovTheorem}
Let $\Sigma = (X,U_c,\phi)$ be a time-invariant control system, and $x \equiv 0$ be its equilibrium point.

Also let for all $u \in U_c$  and for all $s \geq 0$ a function $\tilde{u}$, defined by $\tilde{u}(\tau)= u(\tau + s)$ for all $\tau \geq 0$, belong to $U_c$ and $\|\tilde{u}\|_{U_c} \leq \|u\|_{U_c}$.

If $\Sigma$ possesses a (L)ISS-Lyapunov function, then it is (L)ISS.
\end{Satz}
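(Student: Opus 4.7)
The plan is to adapt Sontag's classical Lyapunov-to-ISS argument to the abstract semigroup setting of Definition~\ref{Steurungssystem}. I would fix $x_0 \in X$ and $u \in U_c$, set $W(s) := V(\phi(s, x_0, u))$, and exploit the semigroup property $\phi(s+h, x_0, u) = \phi(h, \phi(s, x_0, u), \tilde u_s)$, where $\tilde u_s(\tau) := u(s+\tau)$. The time-shift hypothesis on $U_c$ guarantees $\tilde u_s \in U_c$ with $\|\tilde u_s\|_{U_c} \leq \|u\|_{U_c}$, so the upper right Dini derivative of $W$ at $s$ obeys $\overline{D}^+ W(s) \leq \dot V_{\tilde u_s}(\phi(s, x_0, u))$ by \eqref{LyapAbleitung}. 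Since $\chi$ is increasing, whenever $\|\phi(s, x_0, u)\|_X \geq \chi(\|u\|_{U_c})$ the implication \eqref{GainImplikation} applied to the pair $(\phi(s,x_0,u),\tilde u_s)$ yields the key dissipation inequality $\overline{D}^+ W(s) \leq -\alpha(\|\phi(s, x_0, u)\|_X)$ along the trajectory.

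From here the proof splits into two routine ingredients. In the local case one first selects $\rho_x', \rho_u' > 0$ small enough that $\psi_1^{-1}\bigl(\max\{\psi_2(\rho_x'),\psi_2(\chi(\rho_u'))\}\bigr) \leq \rho_x$; the dissipation inequality together with the sandwich \eqref{LyapFunk_1Eig} then gives forward invariance of $\{\|\cdot\|_X \leq \rho_x\} \subset D$ for $\|x_0\|_X \leq \rho_x'$, $\|u\|_{U_c} \leq \rho_u'$, and BIC supplies global-in-time existence. On the sublevel set $\{W(s) > \psi_2(\chi(\|u\|_{U_c}))\}$ one further has $\overline{D}^+ W(s) \leq -\alpha(\psi_2^{-1}(W(s)))$, and a standard Lin--Sontag--Wang scalar comparison lemma furnishes $\tilde\beta \in \KL$ with $W(s) \leq \max\{\tilde\beta(W(0),s),\psi_2(\chi(\|u\|_{U_c}))\}$. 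Setting $\beta(r,s) := \psi_1^{-1}(\tilde\beta(\psi_2(r),s))$ and $\gamma := \psi_1^{-1}\circ\psi_2\circ\chi$, and using \eqref{LyapFunk_1Eig} together with $\psi_1^{-1}(\max\{a,b\}) \leq \psi_1^{-1}(a)+\psi_1^{-1}(b)$, delivers the (L)ISS estimate \eqref{iss_sum}.

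The main obstacle is the scalar comparison step, because $\alpha$ is only positive definite (not $\K$) and $W$ is in general only continuous along trajectories, so one has to work with Dini derivatives and cite a suitable comparison result, after majorizing $\alpha$ from below by a $\K$-function on each bounded sublevel of $V$ in order to recover a genuine $\KL$ estimate. A more conceptual point worth emphasizing is that the time-shift admissibility hypothesis on $U_c$ is precisely what allows the pointwise dissipation encoded in \eqref{GainImplikation} at time $0$ to be propagated to every $s \geq 0$ through the semigroup and causality axioms; without it the passage from $\dot V_u$ to $\overline{D}^+ W$ would fail and the whole argument would collapse.
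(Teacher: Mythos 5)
Your proposal is correct and follows essentially the same route as the paper's proof: both establish forward invariance of the sublevel set $\{V \leq \psi_2\circ\chi(\|u\|_{U_c})\}$ via the time-shift hypothesis and the gain implication \eqref{GainImplikation}, apply the Lin--Sontag--Wang comparison lemma to the trajectory outside that set, and assemble $\beta = \psi_1^{-1}\circ\tilde\beta(\psi_2(\cdot),\cdot)$ and $\gamma = \psi_1^{-1}\circ\psi_2\circ\chi$. Your explicit treatment of the Dini derivative of $W(s)=V(\phi(s,x_0,u))$ and the remark that $\alpha$ must be minorized by a $\K$-function on sublevel sets are slightly more careful than the paper's presentation, but they do not change the argument.
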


For a counterpart of this theorem for infinite-dimensional dynamical systems without inputs see, e.g., \cite{Hen81}.

\begin{proof}
Let the control system $\Sigma=(X,U_c,\phi)$ possess a LISS-Lyapunov function and $\psi_1,\psi_2,\chi,\alpha$, $\rho_x, \rho_u$ be as in the Definition \ref{DefLISS_LF}. Take an arbitrary control $u \in U_c$ with $\|u\|_{U_c} \leq \rho_u$ such that 
\[
I=\{x \in D:\|x\|_X \leq \rho_x,\ V(x) \leq \psi_2 \circ \chi(\|u\|_{U_c}) \leq \rho_x \} \subset int(D).
\]
Such $u$ exists, because $0 \in int(D)$. 

Firstly we prove, that $I$ is invariant w.r.t. $\Sigma$, that is: $\forall x \in I \Rightarrow x(t)=\phi(t,x,u) \in I$, $t \geq 0$. 

If $u \equiv 0$, then $I=\{0\}$, and $I$ is invariant, because $x=0$ is the equilibrium point of $\Sigma$. Consider $u \not \equiv 0$.

If $I$ is not invariant w.r.t. $\Sigma$, then, due to continuity of $\phi$ w.r.t. $t$ (continuity axiom of $\Sigma$), $\exists t_*>0$, such that $V(x(t_*))=\psi_2 \circ \chi(\|u\|_{U_c})$, and therefore $\|x(t_*)\|_X \geq \chi(\|u\|_{U_c})$. 

The input to the system $\Sigma$ after time $t^*$ is $\tilde{u}$, defined by  $\tilde{u}(\tau)=u(\tau + t^*)$, $\tau \geq 0$. According to the assumptions of the theorem $\|\tilde{u}\|_{U_c} \leq \|u\|_{U_c}$.
Then from \eqref{GainImplikation} it follows, that $\dot{V}_{\tilde{u}}(x(t_*))=-\alpha(\|x(t_*)\|_X)<0$. Thus, the trajectory cannot escape the set $I$.

Now take arbitrary $x_0$: $\|x_0\|_X \leq \rho_x$. As long as $x_0 \not\in I$, we have the following differential inequality ($x(t)$ is the trajectory, corresponding to the initial condition $x_0$):
\[
\dot{V}(x(t)) \leq - \alpha(\|x(t)\|_X) \leq - \alpha \circ \psi_2^{-1}(V(x(t))).
\]
From the comparison principle (see \cite{LSW96}, Lemma 4.4 for $y(t)=V(x(t))$) it follows, that $\exists\ \tilde{\beta} \in \KL:\ V(x(t)) \leq \tilde{\beta}(V(x_0),t)$, and consequently:
\begin{equation}
\label{BetaSchaetzung}
\|x(t)\|_X \leq \beta(\|x_0\|_X,t), \forall t:\ x(t) \notin I,
\end{equation}
where $\beta(r,t)=\psi^{-1}_1 \circ \tilde{\beta}(\psi_2^{-1}(r),t)$, $\forall r,t \geq 0$.\\
From the properties of $\KL$ functions it follows, that $\exists t_1$:
\[
t_1:= \inf_{t \geq 0}\{ x(t)=\phi(t,x_0,u) \in I\}.
\]
From the invariance of the set $I$ we conclude, that 
\begin{equation}
\label{GammaSchaetzung}
\|x(t)\|_X \leq \gamma(\|u\|_{U_c}), \ t>t_1,
\end{equation}
where $\gamma=\psi_1^{-1} \circ \psi_2 \circ \chi \in \K$.

Our estimates hold for arbitrary control $u$: $\|u\|_{U_c} \leq \rho_u$, thus, combining \eqref{BetaSchaetzung} and \eqref{GammaSchaetzung}, we obtain the claim of the theorem. 

To prove, that from existence of ISS-Lyapunov function it follows ISS of $\Sigma$, one has to argue as above but with $\rho_x=\rho_u=\infty$. \hfill $\blacksquare$
\end{proof}


\begin{remark}
The assumption in the Theorem~\ref{LyapunovTheorem} concerning the properties of $U_c$ holds for many usual function classes, such as $PC(\R_+,U)$, $L_p (\R_+,U)$, $p \geq 1$, $L_{\infty}(\R_+,U)$, Sobolev spaces etc.
\end{remark}

We are going to prove, that our definition of an ISS-Lyapunov function, applied for an ODE system, is resolved to the standard definition of an ISS-Lyapunov function \cite{SoW95}.

Firstly we reformulate the definition of LISS-LF for the case, when $U_c= PC(\R_+,U)$.

\begin{proposition}
\label{LISS_LF_PC}
A continuous function $V:D \to \R_+$, $D \subset X$, $0 \in int(D)=D \backslash \partial D$ is a LISS-Lyapunov function for $\Sigma = (X,PC(\R_+,U),\phi)$ if and only if there exist $\rho_x,\rho_u>0$ and functions $\psi_1,\psi_2 \in \Kinf$, $\tilde{\chi} \in \K$ and positive definite function $\alpha$, such that:
\[
\psi_1(\|x\|_X) \leq V(x) \leq \psi_2(\|x\|_X), \quad \forall x \in D
\]
and 
$\forall x \in X: \|x\|_X \leq \rho_x, \; \forall \xi \in U: \|\xi\|_U \leq \rho_u$ it holds
\begin{equation}
\label{GainImplikation_PC}
 \|x\|_X \geq \tilde{\chi}(\|\xi\|_U) \  \Rightarrow  \ \dot{V}_u(x) \leq -\alpha(\|x\|_X),
\end{equation}
for all $u \in U_c$: $\|u\|_{U_c} \leq \rho_u$ with $u(0)=\xi$.
\end{proposition}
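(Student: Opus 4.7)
My approach is to prove the two directions separately, exploiting the causality axiom of Def~\ref{Steurungssystem} to show that the Lie derivative $\dot{V}_u(x)$ depends on $u$ only through an arbitrarily small right-neighborhood of $t=0$.

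For the easy direction $(\Leftarrow)$, I would set $\chi := \tilde{\chi}$ and keep $\psi_1, \psi_2, \alpha, \rho_x, \rho_u$ unchanged. Given $u \in U_c$ with $\|u\|_{U_c} \leq \rho_u$, let $\xi := u(0)$; then $\|\xi\|_U \leq \|u\|_{U_c} \leq \rho_u$, and by monotonicity of $\tilde{\chi}$ one has $\chi(\|u\|_{U_c}) \geq \tilde{\chi}(\|\xi\|_U)$. Hence the premise of \eqref{GainImplikation_PC} (applied to this $u$, which satisfies $u(0)=\xi$ by construction) is implied by the premise of \eqref{GainImplikation}, and the Lie-derivative bound transfers without change.

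The harder direction is $(\Rightarrow)$. The idea is to replace an arbitrary PC input $u$ with $u(0)=\xi$ by a truncated input $u_\epsilon$ whose sup-norm exceeds $\|\xi\|_U$ by at most $\epsilon$ and which coincides with $u$ on an initial interval $[0,\delta]$; causality then forces $\phi(t,x,u) = \phi(t,x,u_\epsilon)$ for $t \in [0,\delta]$, so $\dot{V}_u(x) = \dot{V}_{u_\epsilon}(x)$ in view of the $\limsup$-definition \eqref{LyapAbleitung}. Concretely, I would halve the radius, taking the new $\rho_u$ to be $\rho_u^0/2$, where $\rho_u^0$ denotes the radius supplied by Def~\ref{DefLISS_LF}. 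Given $\xi$ with $\|\xi\|_U \leq \rho_u^0/2$ and $u \in U_c$ with $u(0)=\xi$, $\|u\|_{U_c} \leq \rho_u^0/2$, right-continuity of $u$ at $0$ yields, for each $\epsilon \in (0,\rho_u^0/2]$, some $\delta>0$ with $\|u(t)\|_U \leq \|\xi\|_U + \epsilon$ on $[0,\delta]$; then set $u_\epsilon(t) := u(t)$ for $t \in [0,\delta]$ and $u_\epsilon(t) := u(\delta)$ for $t > \delta$. One checks $u_\epsilon \in PC(\R_+,U)$ with $\|u_\epsilon\|_{U_c} \leq \|\xi\|_U + \epsilon \leq \rho_u^0$, and Def~\ref{DefLISS_LF} applied to $u_\epsilon$ gives $\dot{V}_u(x) = \dot{V}_{u_\epsilon}(x) \leq -\alpha(\|x\|_X)$ whenever $\|x\|_X \geq \chi(\|\xi\|_U + \epsilon)$. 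Letting $\epsilon \downarrow 0$ yields the bound for every $\|x\|_X > \chi(\|\xi\|_U)$, and setting $\tilde{\chi}(r) := \chi(r)+r \in \K$ (so that $\tilde{\chi}(r) > \chi(r)$ for $r>0$) upgrades this strict inequality to the desired non-strict implication \eqref{GainImplikation_PC}.

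The main obstacle is the identification $\dot{V}_u(x) = \dot{V}_{u_\epsilon}(x)$ without any hypothesis on continuous dependence of $\phi$ on $u$: causality alone suffices, since the Lie derivative only probes $\phi(t,x,u)$ as $t \to 0^+$, where $u$ and $u_\epsilon$ coincide exactly. Everything else is bookkeeping --- $\psi_1, \psi_2, \alpha, \rho_x$ carry over unchanged and only $\rho_u$ may need to be reduced.
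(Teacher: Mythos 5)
Your proof is correct and follows essentially the same route as the paper: the sufficiency direction uses $\|u(0)\|_U \leq \|u\|_{U_c}$ directly, and the necessity direction truncates $u$ near $t=0$ using right-continuity, invokes causality to identify $\dot{V}_u(x)$ with $\dot{V}_{u_\eps}(x)$, and absorbs the small excess into an enlarged gain. The only (immaterial) differences are that you enlarge the gain additively ($\tilde{\chi}(r)=\chi(r)+r$ after letting $\eps \downarrow 0$) where the paper uses $\tilde{\chi}(r)=\chi((1+\eps)r)$ for a fixed $\eps$, and you halve $\rho_u$, which is unnecessary since your $u_\eps$ is a truncation of $u$ and hence already satisfies $\|u_\eps\|_{U_c}\leq\|u\|_{U_c}$.
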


\begin{proof}
We begin with sufficiency. Let $u \in U_c=PC(\R_+,U)$, $\|u\|_{U_c} \leq \rho_u$.
Take arbitrary $x \in X$ and assume that $\|x\|_X \geq \chi(\|u\|_{U_c})$. 
Then $\|x\|_X \geq \chi(\|u(0)\|_{U})$ and according to \eqref{GainImplikation_PC} for this $u$ it holds $\dot{V}_u(x) \leq -\alpha(\|x\|_X)$. The implication \eqref{GainImplikation} is proved and thus $V$ is a LISS-Lyapunov function according to Definition \ref{DefLISS_LF}. 

Let us prove necessity. 
Take arbitrary $u \in U_c$, and for arbitrary $s >0$ consider the input $u_s \in U_c$ defined by
\[
{u}_s(\tau):= 
\left\{ 
\begin{array}{l}
{u(\tau),\tau \in [0,s],} \\
{u(s), \tau >s.}
\end{array}
\right.
\]
Due to causality of $\Sigma$, $\phi(t,x,u)= \phi(t,x,u_s)$ for all $t \in [0,s]$, and according to the definition of Lie derivative we obtain $\dot{V}_u(x) = \dot{V}_{u_s}(x)$.
Let $u \in U_c$ and $\|u\|_{U_c} \leq \rho_u$. 
Then also $\|u_s\|_{U_c} \leq \rho_u$ and since $V$ is a LISS-Lyapunov function it follows that
\[
\|x\|_X \geq \chi(\|u_s\|_{U_c}) \  \Rightarrow  \ \dot{V}_{u_s}(x) \leq -\alpha(\|x\|_X).
\]
Then it holds also
\begin{equation}
\label{HilfImpl_PC}
 \|x\|_X \geq \chi(\|u_s\|_{U_c}) \  \Rightarrow  \ \dot{V}_u(x) \leq -\alpha(\|x\|_X).
\end{equation}
Since $U_c= PC(\R_+,U)$, it follows that for arbitrary $u \in U_c$ and arbitrary $\eps>0$ there exists $\tau >0$ such that $\|u_{\tau}\|_{U_c} \leq (1+\eps)\|u(0)\|_U$.
Then from \eqref{HilfImpl_PC} it follows that 
\begin{equation*}
\|x\|_X \geq \tilde{\chi}(\|u(0)\|_{U}) \  \Rightarrow  \ \dot{V}_u(x) \leq -\alpha(\|x\|_X),
\end{equation*}
where $\tilde{\chi}(r)=\chi( (1+ \eps) r)$, for all $r \geq 0$.

Since $u \in U_c$, $\|u\|_{U_c} \leq \rho_u$ has been chosen arbitrarily, the necessity is proved. \hfill $\blacksquare$
\end{proof}

Now consider an ODE system
\begin{equation}
\label{ODE_System}
\dot{x}(t)=f(x(t),u(t)), \ x(t) \in \R^n, \ u(t) \in \R^m.
\end{equation}
System \eqref{ODE_System} defines a time-invariant control system $\Sigma=(X,U_c,\phi)$, where $X = \R^n$, $U_c=L_{\infty}(\R_+,\R^m)$ and $\phi(t,x_0,u)$ is a solution of \eqref{ODE_System} subject to a given input $u \in L_{\infty}(\R_+,\R^m)$ and initial condition $x(0)=x_0$.

Let $V:D \to \R_+$, $D \subset \R^n$, $0 \in int(D)=D \backslash \partial D$ be locally Lipschitz continuous function (and thus it is differentiable almost everywhere by Rademacher's theorem).
For such systems $\dot{V}_u(x)$ can be computed for almost all $x$ and the implication 
\eqref{GainImplikation_PC} resolves to 
\begin{equation*}
 \|x\|_X \geq \chi(\| \xi \|_U) \  \Rightarrow  \ \nabla V \cdot f(x,\xi) \leq -\alpha(\|x\|_X).
\end{equation*}
Using this implication instead of \eqref{GainImplikation_PC}, we obtain the standard definition of LISS-Lyapunov function for finite-dimensional systems.
Thus, Definition \ref{DefLISS_LF} is consistent with the existing definitions of LISS-Lyapunov functions for ODE systems.

Note that the system \eqref{ODE_System} is time-invariant, for the space $L_{\infty}(\R_+, \R^m)$ the assumption of the Theorem \ref{LyapunovTheorem} holds, and we obtain basic result from finite-dimensional theory that existence of an (L)ISS-Lyapunov function implies its (L)ISS.


In the following subsection we will need certain type of a density argument, which we state here without a proof.

Let $\Sigma:=(X,U_c,\phi)$ be a control system. Let $\hat{X}$, $\hat{U}_c$ be some dense normed linear subspaces of $X$ and $U_c$ respectively, and let $\hat{\Sigma}:=(\hat{X},\hat{U}_c,\phi)$ be the system, generated by the same as in $\Sigma$ transition map $\phi$, but restricted to the state space $\hat{X}$ and space of admissible inputs $\hat{U}_c$.

Assume that $\phi$ depends continuously on inputs and on initial states, that is
$\forall x \in X, \forall u \in U_c, \forall T>0$ and $\forall \eps>0$ there exist $\delta>0$, such that $\forall x' \in X: \|x-x'\|_X< \delta$ and 
$\forall u' \in U_c: \|u-u'\|_{U_c}< \delta$ it holds
\[
\|\phi(t,x,u)-\phi(t,x',u')\|_X< \eps, \quad \forall t \in [0,T].
\]

Now we have the following result
\begin{Lemm}
\label{DensityArg}
Let $\hat{\Sigma}$ be ISS. Then $\Sigma$ is also ISS with the same $\beta$ and $\gamma$
in the estimate \eqref{iss_sum}.
\end{Lemm}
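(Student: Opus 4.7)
The plan is to exploit the density of $\hat{X}$ in $X$ and of $\hat{U}_c$ in $U_c$ together with the continuous dependence of $\phi$ on initial states and inputs in order to transfer the ISS estimate from $\hat{\Sigma}$ to $\Sigma$ pointwise in $t$. No new comparison function needs to be constructed; the conclusion is that the same $\beta$ and $\gamma$ work.

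First, fix arbitrary $x \in X$, $u \in U_c$ and $t \geq 0$. By density choose sequences $\{x_n\} \subset \hat{X}$ and $\{u_n\} \subset \hat{U}_c$ with $x_n \to x$ in $X$ and $u_n \to u$ in $U_c$. Since $\hat{\Sigma}$ is ISS, for each $n$ one has
\[
\|\phi(t,x_n,u_n)\|_X \leq \beta(\|x_n\|_X,t) + \gamma(\|u_n\|_{U_c}).
\]
The transition map of $\hat{\Sigma}$ is the restriction of that of $\Sigma$, so the left-hand side is literally a value of $\phi$ in the big system.

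Second, apply the continuous dependence hypothesis on the compact interval $[0,T]$ with $T:=t$: for every $\eps>0$ there exists $\delta>0$ such that if $\|x-x_n\|_X<\delta$ and $\|u-u_n\|_{U_c}<\delta$, then $\|\phi(t,x,u)-\phi(t,x_n,u_n)\|_X<\eps$. Hence $\phi(t,x_n,u_n)\to \phi(t,x,u)$ in $X$, and by continuity of the norm, $\|\phi(t,x_n,u_n)\|_X \to \|\phi(t,x,u)\|_X$.

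Third, pass to the limit on the right-hand side. Since $\beta(\cdot,t)\in\K$ and $\gamma\in\K$ are continuous, and $\|x_n\|_X\to\|x\|_X$, $\|u_n\|_{U_c}\to\|u\|_{U_c}$, we obtain
\[
\beta(\|x_n\|_X,t) + \gamma(\|u_n\|_{U_c}) \;\longrightarrow\; \beta(\|x\|_X,t) + \gamma(\|u\|_{U_c}).
\]
Combining the two limits yields the ISS estimate \eqref{iss_sum} for $(x,u)$ at time $t$ with the same $\beta$ and $\gamma$. Since $x$, $u$, $t$ were arbitrary, $\Sigma$ is ISS.

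The proof is essentially a density/continuity argument and I do not anticipate a genuine obstacle; the only point requiring care is that the continuous dependence must be invoked on an interval containing $t$, which is why one fixes $T=t$ (or any $T>t$) before taking the limit. No uniformity in $t$ is needed, because the ISS bound is a pointwise-in-$t$ inequality and $\beta$, $\gamma$ do not depend on the approximating sequences.
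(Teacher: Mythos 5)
Your argument is correct and is precisely the standard density-plus-continuous-dependence argument that the paper itself leaves out (the lemma is explicitly stated there ``without a proof''): fix $(x,u,t)$, approximate by $(x_n,u_n)\in\hat{X}\times\hat{U}_c$, use the ISS estimate for $\hat{\Sigma}$, and pass to the limit using continuity of $\phi$, of the norm, and of $\beta(\cdot,t)$ and $\gamma$. The only point worth flagging is the global existence of the limiting trajectory $\phi(\cdot,x,u)$ — the bound inherited in the limit on $[0,t_m)$ together with the BIC property excludes finite-time blow-up — but this is already implicit in the paper's continuity hypothesis, which is formulated for arbitrary $T>0$.
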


In the next subsection we demonstrate an application of the theory developed in this section on an example from parabolic PDEs.




\subsection {Example}

Consider the following system
\begin{equation}
\label{UnserSystem}
\left
\{
\begin {array} {l} 
{\frac{\partial s}{\partial t} = \frac{\partial^2 s}{\partial x^2} - f(s) + u^m(x,t), \quad x \in (0,\pi),\ t>0,}\\
{s(0,t) = s(\pi,t)=0.}
\end {array}
\right.	
\end{equation}

We assume, that $f$ is locally Lipschitz continuous, monotonically increasing up to infinity, $f(-r)=-f(r)$ for all $r \in \R$ (in particular, $f(0)=0$), and $m \in (0,1]$.


To reformulate \eqref{UnserSystem} as an abstract differential equation we define operator $A$ by $As:= \frac{d^2s }{dx^2}$ with $D(A)=H^1_0(0,\pi) \cap H^2(0,\pi)$. 

The norm on $H^1_0(0,\pi)$ we choose as $\|s\|_{H^1_0(0,\pi)}:= \left( \int_0^\pi{   \left(\frac{\partial s}{\partial x} \right)^2 dx} \right)^\frac{1}{2}.$

Operator $A$ generates an analytic semigroup on $L_2(0,\pi)$. System \eqref{UnserSystem} takes the form
\begin{equation}
\label{UnserSystem_AbstrForm}
{\frac{\partial s}{\partial t} = As - F(s) + u^m, \ t>0,}
\end{equation}
where $F$ is defined by $F(s(t)) (x):=f(s(x,t))$, $x \in (0,\pi)$.

Equation \eqref{UnserSystem_AbstrForm} defines a control system with the state space $X = H^1_0(0,\pi)$ and input function space $U_c=C(\R_+,L_2(0,\pi))$. 

Consider the following ISS-Lyapunov function candidate:
\begin{equation}
\label{LF}
V(s):= \int_0^\pi{ \left( \frac{1}{2} \left(\frac{\partial s}{\partial x} \right)^2+ \int_0^{s(x)}{f(y)dy} \right) dx}.
\end{equation}

We are going to prove, that $V$ is an ISS-Lyapunov function.

Under the above assumptions about function $f$ it holds that $\int_0^{r}{f(y)dy} \geq 0$ for every $r \in \R$.

We have to verify the estimates \eqref{LyapFunk_1Eig} for a function $V$. 
The estimate from below is easy:
\begin{equation}
\label{LF_1condition}
V(s) \geq \int_0^\pi{\frac{1}{2} \left(\frac{\partial s}{\partial x} \right)^2 dx} = \frac{1}{2}  \|s\|^2_{H^1_0(0,\pi)}.
\end{equation}

Let us find an estimate from above. We have
\begin{equation*}
V(s)= \int_0^\pi{ \frac{1}{2} \left(\frac{\partial s}{\partial x} \right)^2}\ dx + 
\int_0^\pi{ \int_0^{s(x)}{f(y)dy} \ dx}.
\end{equation*}

According to the embedding theorem for Sobolev spaces (see \cite[Theorem 6, p. 270]{Eva98}), every $s\in H^1_0(0,\pi)$ belongs actually to $C^{\frac{1}{2}}(0,\pi)$ (H\"older space with H\"older exponent $\frac{1}{2}$). Moreover, there exists a constant $C$, which does not depend on $s\in H^1_0(0,\pi)$, such that
\begin{equation}
\label{EmbeddingIneq}
\|s\|_{C^{\frac{1}{2}}(0,\pi)} \leq C \|s\|_{H^1_0(0,\pi)},\; \forall s \in H^1_0(0,\pi).
\end{equation}
Define $\psi:\R_+ \to \R_+$ by $\psi(r):=\frac{1}{2} r^2 + \sup_{s:\ \|s\|_{H^1_0(0,\pi)} \leq r} \int_0^\pi \int_0^{s(x)}{f(y)dy} dx$. \\
Inequality \eqref{EmbeddingIneq} and the fact that $\|s\|_{C(0,\pi)} \leq \|s\|_{C^{\frac{1}{2}}(0,\pi)}$ for all $s \in C^{\frac{1}{2}}(0,\pi)$ imply
\begin{eqnarray}
\psi(r)&=&\frac{1}{2} r^2 + \sup_{s:\ C\|s\|_{H^1_0(0,\pi)} \leq Cr} \int_0^\pi \int_0^{s(x)}{f(y)dy} dx \\
      &\leq& \frac{1}{2} r^2 + \sup_{s:\ \|s\|_{C(0,\pi)} \leq Cr} \int_0^\pi \int_0^{s(x)}{f(y)dy} dx \leq \psi_2(r),
\end{eqnarray}
where $\psi_2(r) := \frac{1}{2} r^2 + \pi \int_0^{Cr}{f(y)dy}$.
Since $f$, restricted to positive values of the argument, belongs to $\Kinf$,  $\psi_2$ is also $\Kinf$-function.

Finally, for all $s \in H^1_0(0,\pi)$ we have: 
\begin{eqnarray}
\label{1Eig_Beispiel}
\frac{1}{2}  \|s\|^2_{H^1_0(0,\pi)} \leq V(s) \leq \psi_2(\|s\|_{H^1_0(0,\pi)}),
\end{eqnarray}
and the property \eqref{LyapFunk_1Eig} is verified.

Let us compute the Lie derivative of $V$
\begin{eqnarray*}
\dot{V}(s) &=& \int_0^\pi{ \frac{\partial s}{\partial x} \frac{\partial^2 s}{\partial x \partial t}+ f(s(x))  \frac{\partial s}{\partial t} dx} \\
           &=& \left[\frac{\partial s}{\partial x} \frac{\partial s}{\partial t} \right]_{x=0}^{x=\pi} +\int_0^\pi{ \left( -  \frac{\partial^2 s}{\partial x^2}   \frac{\partial s}{\partial t}+ f(s(x)) \frac{\partial s}{\partial t}  \right)dx}.
\end{eqnarray*}

From boundary conditions it follows ${ \frac{\partial s}{\partial t}(0,t) = \frac{\partial s}{\partial t}(\pi,t)=0}$. Thus, substituting expression for $\frac{\partial s}{\partial t}$, we obtain
\[
\dot{V}(s)= - \int_0^\pi{ \left(\frac{\partial^2 s}{\partial x^2}- f(s(x)) \right)^2 dx} + \int_0^\pi{ \left(\frac{\partial^2 s}{\partial x^2}- f(s(x)) \right)(-u^m) dx}.
\]
Define 
\[
I(s):=\int_0^\pi{ \left(\frac{\partial^2 s}{\partial x^2}- f(s(x)) \right)^2 dx}.
\]
Using the Cauchy-Schwarz inequality for the second term, we have:
\begin{equation}
\label{LeiAb_V_IS}
\dot{V}(s) \leq - I(s) +  \sqrt{I(s)} \ \|u^m\|_{L_2(0,\pi)}.
\end{equation}

Now let us consider $I(s)$
\begin{eqnarray*}
I(s) &=&\int_0^\pi{ \left(\frac{\partial^2 s}{\partial x^2} \right)^2 dx} -2\int_0^\pi{ \frac{\partial^2 s}{\partial x^2} f(s(x)) dx}+ \int_0^\pi{f^2(s(x)) dx} \\
     &=& \int_0^\pi {\left(\frac{\partial^2 s}{\partial x^2} \right)^2 dx} +2\int_0^\pi{ \left(\frac{\partial s}{\partial x} \right)^2 \frac{\partial f}{\partial s}(s(x)) dx}
   + \int_0^\pi{f^2(s(x)) dx}  \\
	  &\ge& \int_0^\pi{ \left(\frac{\partial^2 s}{\partial x^2} \right)^2 dx}.
\end{eqnarray*}

For $s \in H^1_0(0,\pi) \cap H^2(0,\pi)$ it holds (see \cite{Hen81}, p. 85), that
\[
\int_0^\pi{ \left(\frac{\partial^2 s}{\partial x^2} \right)^2 dx} \ge \int_0^\pi{ \left(\frac{\partial s}{\partial x} \right)^2 dx}.
\]

Overall, we have:
\begin{equation}
\label{IS_Abschaetzung}
I(s) \geq \|s\|^2_{H^1_0(0,\pi)}.
\end{equation}

Let us consider $\|u^m\|_{L_2(0,\pi)}$. Using the H\"older inequality, we obtain:
\begin{eqnarray}
\label{NormAbsch2}
\|u^ {m }\|_{L_2(0,\pi)} &=& \left( \int_0^{\pi} {u^ {2m } \cdot 1 \ dx} \right)^{\frac{1}{2}} \notag \\ 
   &\leq&
{\left( \int_0^{\pi} {u^2\ dx} \right)^{ \frac{m }{2}}  \left( \int_0^{\pi} { 1^\frac{1}{{1- m }} dx} \right)^{\frac{1- m }{2}} = \pi^{\frac{1- m }{2}}   \|u\|^{m}_{L_2(0,\pi)}}.
\end{eqnarray}

Now we choose the gain as 
\[
\chi(r)=a \pi^{\frac{1- m}{2}} r^m,\; a>1. 
\]
If $\chi(\|u\|_{L_2(0,\pi)}) \le \|s\|_{H^1_0(0,\pi)}$, we obtain from \eqref{LeiAb_V_IS}, using \eqref{NormAbsch2} and \eqref{IS_Abschaetzung}:
\begin{equation}
\label{LyapGainAbsch}
\dot{V}(s) \leq - I(s) +  \frac{1}{a} \sqrt{I(s)} \|s\|_{H^1_0(0,\pi)} \leq (\frac{1}{a} -  1)I(s) \leq (\frac{1}{a} -  1) \|s\|^2_{H^1_0(0,\pi)}.
\end{equation}

The above computations are valid for states $s \in \hat{X}$:  $\hat{X}:=\{s \in C^{\infty}([0,\pi]):\; s(0)=s(\pi)=0 \}$ and inputs $u \in \hat{U_c}$, $\hat{U_c}:=C(\R_+,C^{\infty}([0,\pi]))$.

The system $(\hat{X},\hat{U_c},\phi)$, where $\phi(\cdot,s,u)$ is a solution of \eqref{UnserSystem}for $s \in \hat{X}$ and $u \in \hat{U_c}$, possesses the ISS-Lyapunov function and consequently is ISS according to Proposition~\ref{LISS_LF_PC}.

It is known, that $\hat{X}$ is dense in $H^1_0(0,\pi)$ and $\hat{U_c}$ is dense in $C(\R_+,L_2([0,\pi]))$. According to the Lemma~\ref{DensityArg} the system \eqref{UnserSystem} is also ISS (with $X=H^1_0(0,\pi)$, $U_c=C(\R_+,L_2(0,\pi))$).

%



\begin{remark}
\label{RemUX}
In the example we have taken  $U=L_2(0,\pi)$ and $X=H^1_0(0,\pi)$. But in case of interconnection with other parabolic systems (when we identify input $u$ with the state of the other system), that have state space $H^1_0(0,\pi)$ (as our system), we have to choose $U=X=H^1_0(0,\pi)$. In this case we can continue the estimates \eqref{NormAbsch2}, using Friedrichs' inequality 
\[
\int_0^\pi{ s^2(x) dx} \le \int_0^\pi{ \left(\frac{\partial s}{\partial x} \right)^2 dx}
\]
 to obtain
\begin{equation}
\label{NormAbsch4}
\|u^m\|_{L_2(0,\pi)} \leq \pi^{\frac{1- m}{2}} \|u\|^{m}_{H^1_0(0,\pi)}
\end{equation}
and choosing the same gains, prove the input-to state stability of \eqref{UnserSystem_AbstrForm} w.r.t. spaces $X=H^1_0(0,\pi)$, $U_c=C(\R_+,H^1_0(0,\pi))$.
\end{remark}

\begin{remark}
The input-to-state stability for semilinear parabolic PDEs has been studied also in the recent paper \cite{MaP11}. However, the definition of ISS and of ISS-Lyapunov function in that paper are different from used in our paper.
In particular, consider the property of \eqref{LyapFunk_1Eig} of ISS-Lyapunov function. The corresponding property (2) from \cite{MaP11} is not equivalent to \eqref{LyapFunk_1Eig} for 
$X:=C^2([0,L],\R^n)$ equipped with the $L_2$-norm (which is chosen as the state space in \cite{MaP11}), since the expression in (2) from \cite{MaP11} cannot be bounded by a function of $L_2$-norm of an element of $X$ in general.
\end{remark}


\section{Linearization}
\label{Linearisierung}

In this section we prove two theorems, stating that a nonlinear system is LISS provided its linearization is ISS. One of them needs less restrictive assumptions, but it doesn't provide us with a LISS-Lyapunov function for the nonlinear system. In the other theorem it is assumed, that the state space is a Hilbert space. This assumption yields a form of LISS-Lyapunov function.

Consider the system
\begin{equation}
\label{InfiniteDim}
\dot{x}(t)=Ax(t)+f(x(t),u(t)), \ x(t) \in X, u(t) \in U,
\end{equation}
where $X$ is a Banach space, $A$ is the generator of a $C_0$-semigroup, $f:X \times U \to X$ is defined on some open set $Q$, $(0,0)$ is in interior of $Q$ and
$f(0,0)=0$, thus $x \equiv 0$ is an equilibrium point of \eqref{InfiniteDim}.


In this section we assume, that $f$ can be decomposed as
\[
f(x,u)=Bx+ Cu + g(x,u),
\]
where $B \in L(X)$, $C \in L(U,X)$ and
for each constant $w>0$ there exist $\rho>0$, such that $\forall x: \|x\|_X \leq \rho,\ \forall u: \|u\|_U \leq \rho$ it holds
\begin{equation}
\label{G_Schranke}
\|g(x,u)\|_X \leq w (\|x\|_X+\|u\|_U).
\end{equation}

Consider also the linear approximation of a system \eqref{InfiniteDim}, given by
\begin{equation}
\label{LinearInfiniteSyst}
\dot{x}=Rx + Cu,
\end{equation}
where $R=A+B$ is an infinitesimal generator of a $C_0$-semigroup (which we denote by $T$), as the sum of a generator $A$ and bounded operator $B$.

Our first result of this section is
\begin{Satz}
\label{LinearisationSatz_1}
If \eqref{LinearInfiniteSyst} is ISS, then \eqref{InfiniteDim} is LISS.
\end{Satz}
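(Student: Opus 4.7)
The natural route is to convert ISS of the linear system into an exponential bound for the semigroup, and then treat $g$ as a small perturbation via the variation-of-constants formula.

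First, by Proposition \ref{HauptLinSatz} applied to \eqref{LinearInfiniteSyst}, ISS of the linearization is equivalent to exponential stability of the $C_0$-semigroup $T$ generated by $R$, so there exist constants $M,\omega>0$ with $\|T(t)\|\le Me^{-\omega t}$ for all $t\ge 0$. The mild (weak) solution of \eqref{InfiniteDim} then satisfies
\begin{equation*}
x(t)=T(t)x_0+\int_0^t T(t-s)\bigl[Cu(s)+g(x(s),u(s))\bigr]\,ds,
\end{equation*}
as long as $(x(s),u(s))$ stays in the domain of the decomposition of $f$.

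The plan is to choose a small constant $w>0$ such that $Mw/\omega<\tfrac12$ (any constant strictly less than $1$ works), and then pick $\rho>0$ from \eqref{G_Schranke} so that $\|g(x,u)\|_X\le w(\|x\|_X+\|u\|_U)$ on $\{\|x\|_X\le\rho\}\times\{\|u\|_U\le\rho\}$. Next I would choose $\rho_x,\rho_u>0$ small enough (in terms of $M$, $\omega$, $w$, $\|C\|$) so that, as long as the trajectory remains in the ball of radius $\rho$, taking norms yields
\begin{equation*}
\|x(t)\|_X\le Me^{-\omega t}\|x_0\|_X+\frac{M(\|C\|+w)}{\omega}\|u\|_{U_c}+Mw\int_0^t e^{-\omega(t-s)}\|x(s)\|_X\,ds.
\end{equation*}
A Gronwall-type inequality (e.g.\ the standard one for $e^{\omega t}\|x(t)\|_X$) then produces an estimate of the form
\begin{equation*}
\|x(t)\|_X\le \tilde Me^{-(\omega-Mw)t}\|x_0\|_X+\tilde\gamma\,\|u\|_{U_c},
\end{equation*}
with $\omega-Mw>0$ by the choice of $w$, which is exactly the LISS estimate \eqref{iss_sum} with $\beta(r,t)=\tilde Me^{-(\omega-Mw)t}r\in\mathcal{KL}$ and linear $\gamma\in\mathcal{K}$.

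The main obstacle is a bootstrapping issue: the estimate \eqref{G_Schranke} is only valid while $\|x(t)\|_X\le\rho$, so one has to argue that this condition is not violated. I would handle this by the usual continuity/maximality argument supported by the BIC property stated after Definition \ref{Steurungssystem}: let $t^\star=\sup\{t\ge 0:\|x(s)\|_X\le\rho\text{ for all }s\in[0,t]\}$, shrink $\rho_x$ and $\rho_u$ so that the right-hand side of the a priori estimate on $[0,t^\star)$ is strictly less than $\rho$, and conclude by contradiction that $t^\star=\infty$. Once this invariance is established, the Gronwall estimate above holds globally in $t$ and delivers the LISS bound with the desired $\rho_x,\rho_u$.
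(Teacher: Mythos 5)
Your proposal is correct and follows essentially the same route as the paper: Proposition \ref{HauptLinSatz} (with Lemma \ref{HilfsAequivalenzen}) gives exponential stability of the semigroup generated by $R$, the variation-of-constants formula is applied to the mild solution with $g$ treated as a small perturbation under \eqref{G_Schranke}, and the local validity of that bound is secured by the same continuity/BIC bootstrapping argument. The only technical difference is that the paper absorbs the perturbation term via a weighted-supremum (``fading memory'') estimate on $e^{(h-\eps)t}x(t)$ rather than Gronwall's inequality, which costs nothing in the LISS conclusion but additionally yields the ISDS-type decaying gain noted in the remark following the theorem.
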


\begin{proof}
System \eqref{LinearInfiniteSyst} is ISS, then according to Proposition~\ref{HauptLinSatz} and Lemma~\ref{HilfsAequivalenzen} the semigroup $T$ is exponentially stable, that is for some $K,h>0$ it holds $\|T(t)\| \leq Ke^{-ht}$.

For a trajectory $x(\cdot)$ it holds
\begin{equation*}
x(t)=T(t)x_0 + \int_0^t T(t-s) \left(Cu(s)+g(x(s),u(s))\right) ds.
\end{equation*}
We have:
\begin{equation*}
\|x(t)\|_X \leq Ke^{-ht} \|x_0\|_X + K \int_0^t e^{-h(t-s)} (\|C\| \|u(s)\|_U+\|g(x(s),u(s))\|_X) ds.
\end{equation*}
Take small enough $w>0$. Then there exists some $r>0$, such that \eqref{G_Schranke} holds for all $x,u$: $\|x\|_X \leq r$ and $\|u\|_U \leq r$.
Take initial condition $x_0$ and input $u$ such that $\|u\|_{U_c} < r$ and $\|x_0\|_X < r$. Then, due to continuity of the trajectory, there exist some $t^*>0$ such that $\|x(t)\|_X < r$, $t \in [0,t^*]$. 

For all $t \in [0,t^*]$ and every $\eps<h$ using "fading-memory" estimates (see, e.g. \cite{KaJ11b}) 
we obtain
\begin{eqnarray*} 
\|x(t)\|_X  & \leq &Ke^{-ht} \|x_0\|_X \\
   & &  + K \int_0^t e^{-\eps(t-s)} e^{-(h-\eps)(t-s)} (\|C\| \|u(s)\|_U+w (\|x(s)\|_X+\|u(s)\|_U) ) ds \\
      & \leq & Ke^{-ht} \|x_0\|_X + \frac{K}{\eps} \sup\limits_{0\leq s\leq t} {e^{-(h-\eps)(t-s)} ( (\|C\|+w) \|u(s)\|_U+w \|x(s)\|_X )}.			
\end{eqnarray*}

Define $\psi$ and $v$ by $\psi(t):=e^{(h-\eps)t}x(t)$ and $v(t):=e^{(h-\eps)t}u(t)$ respectively. Multiplying the previous inequality by $e^{(h-\eps)t}$, we obtain:
\begin{eqnarray*}
\|\psi(t)\|_X   & \leq & Ke^{- \eps t} \|x_0\|_X + \frac{K}{\eps} (\|C\|+w) \sup\limits_{0\leq s\leq t} \|v(s)\|_U + \frac{K}{\eps} w \sup\limits_{0\leq s\leq t} \|\psi(s)\|_X.
\end{eqnarray*}

Assume that $w$ is so that $1-\frac{K}{\eps} w > 0$. Taking supremum from the both sides, we obtain:
\begin{eqnarray*}
\sup\limits_{0\leq s\leq t} \|\psi(s)\|_X   & \leq & \frac{1}{1-\frac{K}{\eps}w} \left(  K \|x_0\|_X + \frac{K}{\eps} (\|C\|+w) \sup\limits_{0\leq s\leq t} \|v(s)\|_U \right).
\end{eqnarray*}
In particular,
\begin{eqnarray*}
\|\psi(t)\|_X   & \leq & \frac{1}{1-\frac{K}{\eps}w} \left(  K \|x_0\|_X + \frac{K}{\eps} (\|C\|+w) \sup\limits_{0\leq s\leq t} \|v(s)\|_U \right).
\end{eqnarray*}
Returning to the variables $x,u$, we have:
\begin{eqnarray*}
\|x(t)\|_X  \leq \frac{K}{1-\frac{K}{\eps}w} \left(e^{-(h-\eps) t} \|x_0\|_X + \frac{(\|C\|+w)}{\eps}  \sup\limits_{0\leq s\leq t} e^{-(h-\eps) (t-s)} \|u(s)\|_U \right).
\end{eqnarray*}
Taking $\|u\|_{U_c}$ and $\|x_0\|_X$ small enough we guarantee that $\|x(t)\|_X < r$ for all $t \in [0,t^*]$. Because of BIC property it is clear, that $t^*$ can be chosen arbitrarily large. 
Thus, the last estimate proves LISS of the system \eqref{InfiniteDim}.
\hfill $\blacksquare$
\end{proof}

\begin{remark}
In the proof of the previous theorem the last inequality is a "fading memory" estimate of a norm of a state. This shows, that the system is not only ISS, but also ISDS, see \cite{Gru02a}.
\end{remark}

\subsection{Constructions of LISS-Lyapunov functions}

In this subsection we are going to use linearization in order to construct LISS-Lyapunov functions for nonlinear systems.

In addition to assumptions in the beginning of the Section \ref{Linearisierung} suppose that $X$ is a Hilbert space with a scalar product $\lel \cdot, \cdot \rir$, and $A$ generates an analytic semigroup on $X$. 

Recall that a self-adjoint operator $P \in L(X)$ is positive if $\lel P x,x \rir > 0$ for all $x \in X$, $x \neq 0$. A positive operator $P$ is called coercive, if $\exists \epsilon >0$, such that
\[
\lel Px,x \rir \geq \epsilon \|x\|^2_X \quad \forall x \in D(P).
\]

%

Since operator $A$ is an infinitesimal generator of an analytic semigroup and $B$ is bounded, 
$R=A+B$ also generates an analytic semigroup.

Let system \eqref{LinearInfiniteSyst} be ISS. Then, according to Proposition \ref{HauptLinSatz},  \eqref{LinearInfiniteSyst} is exponentially 0-UGAS$x$. By Lemma \ref{HilfsAequivalenzen} this implies that $R$ generates exponentially stable semigroup.
By \cite[Theorem 5.1.3, p. 217]{CuZ95} this is equivalent to the existence of 
a positive bounded operator $P\in L(X)$, for which it holds that
\begin{equation}
\label{OperatorBedingung}
\lel Rx,Px \rir + \lel Px,Rx \rir=-\|x\|^2_X, \quad \forall x \in D(R).
\end{equation}

%

If an operator $P$ is coercive, then a LISS-Lyapunov function for a system \eqref{InfiniteDim} can be constructed. More precisely, it holds
\begin{Satz}
\label{LinearisationSatz}
If the system \eqref{LinearInfiniteSyst} is ISS, and there exists a coercive operator $P$, satisfying \eqref{OperatorBedingung}, then a
LISS-Lyapunov function of \eqref{InfiniteDim} can be constructed in the form
\begin{equation}
\label{LF_LinearInfiniteSyst}
V(x)=\lel Px,x \rir.
\end{equation}
\end{Satz}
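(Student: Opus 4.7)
The plan is to verify directly that $V(x) = \lel Px, x \rir$ fulfills the two requirements of Definition~\ref{DefLISS_LF}. First, since $P$ is bounded and coercive with some constant $\eps>0$, one immediately obtains the sandwich estimate
\[
\eps\|x\|_X^2 \le V(x) \le \|P\|\,\|x\|_X^2, \qquad \forall x \in X,
\]
so property \eqref{LyapFunk_1Eig} holds with $\psi_1(r):=\eps r^2$ and $\psi_2(r):=\|P\| r^2$, both of which lie in $\Kinf$.

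Next I would compute the Lie derivative of $V$ along solutions of \eqref{InfiniteDim}, which after using the decomposition $f(x,u)=Bx+Cu+g(x,u)$ read $\dot x = Rx + Cu + g(x,u)$. Exploiting self-adjointness of $P$,
\[
\dot V_u(x) = 2\lel Px, Rx \rir + 2\lel Px, Cu+g(x,u) \rir,
\]
and by the Lyapunov identity \eqref{OperatorBedingung} the first term equals $-\|x\|_X^2$. The Cauchy--Schwarz inequality together with boundedness of $P$ and $C$ and the sublinear bound \eqref{G_Schranke} then gives, for any preassigned $w>0$, the existence of $\rho>0$ such that whenever $\|x\|_X\le\rho$ and $\|u\|_U\le\rho$,
\[
\dot V_u(x) \le -(1-2\|P\|w)\,\|x\|_X^2 + 2\|P\|(\|C\|+w)\,\|x\|_X\,\|u\|_U.
\]
Fixing $w:=1/(4\|P\|)$ (which also fixes a corresponding $\rho$) and then choosing the linear gain $\chi(r):=Kr$ with $K>8\|P\|(\|C\|+w)$, the assumption $\|x\|_X\ge\chi(\|u\|_U)$ forces $\dot V_u(x)\le -\tfrac{1}{4}\|x\|_X^2$; this is exactly \eqref{GainImplikation} with $\alpha(r):=r^2/4$ and $\rho_x:=\rho$, with $\rho_u$ then shrunk if necessary so that $\chi(\rho_u)\le\rho_x$.

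The main obstacle is the rigorous justification of the derivative computation in infinite dimensions, since a mild solution of \eqref{InfiniteDim} is in general only continuous at $t=0$ and $x\in X$ need not lie in $D(R)$. I would handle this by exploiting the analyticity of the semigroup generated by $R=A+B$ (noted in the excerpt): for $x_0\in D(R)$ and sufficiently regular $u$ the mild solution is a strong solution on $(0,T)$ and the chain-rule calculation above is classical, yielding the bound on $\dot V_u$ pointwise. The corresponding inequality for the upper Dini derivative defined in \eqref{LyapAbleitung} then extends to arbitrary $x_0\in X$ and admissible $u$ through a density argument in the spirit of Lemma~\ref{DensityArg}, using continuity of $V$ and continuous dependence of $\phi$ on initial data and inputs. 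Once $V$ is verified as a LISS-Lyapunov function, LISS of \eqref{InfiniteDim} is an immediate consequence of Theorem~\ref{LyapunovTheorem}.
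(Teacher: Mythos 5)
Your proposal follows the same main line as the paper's proof: the sandwich estimate from coercivity and boundedness of $P$, the computation of $\dot V_u$ on $D(R)$ using self-adjointness of $P$, the operator Lyapunov identity \eqref{OperatorBedingung}, Cauchy--Schwarz, and the bound \eqref{G_Schranke}, arriving at $\dot V_u(x)\le -(1-2\|P\|w)\|x\|_X^2+2\|P\|(\|C\|+w)\|x\|_X\|u\|_U$. Your choice of a linear gain $\chi(r)=Kr$ with $w=1/(4\|P\|)$ is a legitimate (and arguably cleaner) alternative to the paper's choice $\chi(r)=\sqrt{r}$, which instead makes the cross term cubic in $\|x\|_X$ so that it is absorbed by the negative quadratic for small $\rho$; both yield \eqref{GainImplikation} locally, and the quantitative bookkeeping in your version checks out.

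The one place where you diverge, and where your argument as written has a gap, is the extension of the decrease estimate from $x\in D(R)$ to arbitrary $x\in X$. A bound on the upper Dini derivative \eqref{LyapAbleitung} is a statement about the infinitesimal behaviour of $t\mapsto V(\phi(t,x,u))$ at $t=0$, and such a bound does not pass to limits $x_n\to x$ of the base point merely by ``continuity of $V$ and continuous dependence of $\phi$'': the limit $t\to+0$ and the limit $n\to\infty$ need not commute. Note also that Lemma~\ref{DensityArg} transfers the integrated ISS estimate \eqref{iss_sum} from a dense subsystem, not a pointwise derivative condition; to make a density argument work you would first have to integrate the decrease along trajectories starting in $D(R)$ (which itself requires an invariance argument) and then pass to the limit in the integrated inequality. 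The paper avoids all of this by using the smoothing property of the analytic semigroup generated by $R$: for any $x\in X$ and admissible $u$ the solution satisfies $\phi(t,x,u)\in D(R)$ for all $t>0$ and $t\mapsto V(\phi(t,x,u))$ is continuously differentiable on $(0,\infty)$, so the mean value theorem gives $\frac{1}{t}\bigl(V(\phi(t,x,u))-V(x)\bigr)=\dot V(\phi(t_*,x,u))$ for some $t_*\in(0,t)$, and letting $t\to+0$ transfers the estimate already established on $D(R)$ to the point $x$ itself. You have the key ingredient (analyticity of the semigroup) in hand; you just need to deploy it via smoothing and the mean value theorem rather than via a density argument.
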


\begin{proof}
%
%

Since $P$ is bounded and coercive, for some $\epsilon>0$ it holds 
\[
\epsilon \|x\|^2_X \leq \lel Px,x \rir \leq \|P\| \|x\|^2_X, \quad \forall x \in X,
\]
and estimate \eqref{LyapFunk_1Eig} is verified.

Let us compute the Lie derivative of $V$ w.r.t. the system \eqref{InfiniteDim}. 
Firstly consider the case, when $x \in D(R)=D(A)$. We have
\begin{eqnarray*}
\dot{V}(x) &=& \lel P\dot{x},x \rir +  \lel Px,\dot{x} \rir \\
 &=&\lel P(Rx +Cu+ g(x,u)),x \rir + \lel Px,Rx +Cu+ g(x,u) \rir  \\
&=& \lel P(Rx),x \rir +  \lel Px,Rx \rir + 
\lel P(Cu+g(x,u)),x \rir +  \lel Px,Cu+g(x,u) \rir.
\end{eqnarray*}
We continue estimates using the property 
\[
\lel P(Rx),x \rir = \lel Rx,Px \rir,
\]
which holds for positive operators, equality \eqref{OperatorBedingung} and for the last two terms Cauchy-Schwarz inequality in the space $X$
\begin{eqnarray*}
\dot{V}(x) &\leq& -\|x\|^2_X + \|P(Cu+g(x,u))\|_X  \|x\|_X + \|Px\|_X \|Cu+g(x,u)\|_X \\ &\leq& -\|x\|^2_X + \|P\| \|(Cu+g(x,u))\|_X  \|x\|_X + \|P\| \|x\|_X \|Cu+g(x,u)\|_X \\ &\leq& -\|x\|^2_X + 2 \|P\| \|x\|_X (\|C\| \|u\|_U + \|g(x,u)\|_X).
\end{eqnarray*}
For each $w>0$ $\exists \rho$, such that $\forall x: \|x\|_X \leq \rho,\ \forall u: \|u\|_U \leq \rho$ it holds \eqref{G_Schranke}.
Using \eqref{G_Schranke} we continue above estimates
\[
\dot{V}(x) \leq -\|x\|^2_X + 2 w \|P\| \|x\|^2_X  + 2 \|P\| (\|C\| +w)\|x\|_X \|u\|_U.
\]
Take $\chi(r):=\sqrt{r}$. Then for $\|u\|_U \leq \chi^{-1}(\|x\|_X)=\|x\|^2_X$ we have:
\begin{equation}
\label{LyapEstimate}
\dot{V}(x) \leq -\|x\|^2_X + 2 w \|P\| \|x\|^2_X  + 2 \|P\| (\|C\| +w)\|x\|^3_X. 
\end{equation}
Choosing $w$ and $\rho$ small enough the right hand side can be estimated from above by some negative quadratic function of $\|x\|_X$. 

These derivations hold for $x \in D(R) \subset X$. If $x \notin D(R)$, then for all admissible $u$ the solution $x(t) \in D(R)$ and $t \to V(x(t))$ is a continuously differentiable function for all $t >0$ (these properties follow from the properties of solutions $x(t)$, see Theorem 3.3.3 in \cite{Hen81}). 

Therefore, by the mean-value theorem, $\forall t>0$ $\exists t_* \in (0,t)$
\[
\frac{1}{t}(V(x(t))-V(x))=\dot{V}(x(t_*)).
\]
Taking the limit when $t \to +0$ we obtain that \eqref{LyapEstimate} holds for all $x \in X$.

This proves that $V$ is a LISS-Lyapunov function with $\|x\|_X \leq \rho$, $\|u\|_U \leq \rho$ and consequently \eqref{InfiniteDim} is LISS.
\hfill $\blacksquare$
\end{proof}

\section{Interconnections of input-to-state stable systems}
\label{GekoppelteISS_Systeme}

In this section we study input-to-state stability of an interconnection
of $n$ ISS systems and provide a generalization of Lyapunov small-gain
theorem from \cite{DRW06b} for the case of infinite-dimensional systems.

Consider the interconnected systems of the following form 
\begin{equation}
\label{Kopplung_N_Systeme}
\left\{ 
\begin{array}{l}
\dot{x}_{i}=A_{i}x_{i}+f_{i}(x_{1},\ldots,x_{n},u),\,\, x_{i}(t)\in X_{i}, \ u(t) \in U\\
i=1,\ldots,n,
\end{array}
\right.  
\end{equation}
where the state space of $i$-th subsystem $X_{i}$ is a Banach space and $A_i$ is a generator of $C_{0}$-semigroup on $X_{i}$, $i=1,\ldots, n$. 
The space $U_c$ we take as $U_c=PC(\R_+,U)$ for some Banach space of input values $U$.

The state space of the system \eqref{Kopplung_N_Systeme} we denote by $X=X_{1}\times\ldots\times X_{n}$, 
which is Banach with the norm $\|\cdot\|_{X}:=\|\cdot\|_{X_{1}}+\ldots+\|\cdot\|_{X_{n}}$.

The input space for the $i$-th subsystem is
$\tilde{X}_i:=X_1 \times \ldots \times X_{i-1} \times X_{i+1} \times \ldots \times X_n \times U$. 
The norm in $\tilde{X}_i$ is given by
\[
\|\cdot\|_{\tilde{X}_i}:=\|\cdot\|_{X_{1}}+\ldots + \|\cdot\|_{X_{i-1}} + \|\cdot\|_{X_{i+1}} + \ldots +\|\cdot\|_{X_{n}} + \|\cdot\|_{U}.
\]
The elements of $\tilde{X}_i$ we denote by $\tilde{x}_i=(x_1,\ldots,x_{i-1},x_{i+1},\ldots,x_n,u) \in \tilde{X}_i$.

The transition map of the $i$-th subsystem we denote by $\phi_i:\R_+ \times X_i \times PC(\R_+, \tilde{X}_i) \to X_i$. Define
\[
x{=}(x_{1}^T,{\ldots},x_{n}^T)^T, \ f(x,u){=}(f_{1}(x,u)^T,{\ldots},f_{n}(x,u)^T)^T,\ A {=} \left(
\begin{array}{cccc}
A_{1} & 0 & \ldots & 0\\
0 & A_{2} & \ldots & 0\\
\vdots & \vdots & \ddots & \vdots\\
0 & 0 & \ldots & A_{n}
\end{array}\right),
\]
where $x_i \in X_i,\ i=1,\ldots, n$.
%
Domain of definition of $A$ is given by $D(A)=D(A_{1})\times\ldots\times D(A_{n})$.
Clearly $A$ is a generator of $C_{0}$-semigroup on $X$. 

We rewrite the system \eqref{Kopplung_N_Systeme} in the vector form: 
\begin{equation} 
\label{KopplungHauptSys}
\dot{x}=Ax+f(x,u).
\end{equation}
Since the inputs are piecewise continuous functions, then according to Proposition \ref{LISS_LF_PC} a function $V_i:X_i \to \R_+$ is an ISS-Lyapunov function for the $i$-th subsystem, if there exist functions $\psi_{i1},\psi_{i2}\in\Kinf$, $\chi \in \K$ and positive definite function $\alpha_{i}$, such that
\[
\psi_{i1}(\|x_{i}\|_{X_{i}})\leq V_{i}(x_{i})\leq\psi_{i2}(\|x_{i}\|_{X_{i}}),\quad\forall x_{i}\in X_{i}
\]
and $\forall x_{i}\in X_{i},\;\forall \tilde{x}_i \in \tilde{X}_i$,
for all $v \in PC(\R_+,\tilde{X}_i)$ with $v(0)=\tilde{x}_i$ it holds the implication
\begin{equation}
\label{GainImplikation_iThSys}
 \|x_i\|_{X_i} \geq \chi(\|\tilde{x}_i\|_{\tilde{X}_i}) \  \Rightarrow  \ \dot{V_{i}}(x_i)\leq-\alpha_{i}(V_{i}(x_{i})),
\end{equation}
 where 
\[
\dot{V}_i(x_{i})=\mathop{\overline{\lim}}\limits _{t\rightarrow+0}\frac{1}{t}(V_i(\phi_{i}(t,x_{i},v)))-V_i(x_{i})).
\]
We are going to rewrite the implication \eqref{GainImplikation_iThSys} in a more suitable form.
We have
\begin{eqnarray*}
\psi^{-1}_{i1}(V_{i}(x_{i})) & \geq & \|x_i\|_{X_i} \geq \chi(\|\tilde{x}_i\|_{\tilde{X}_i}) 
 =  \chi \left( \sum_{j=1,j\neq i}^n \|x_j\|_{{X}_j} + \|u\|_U \right)  \\
&\geq& \frac{1}{n+1} \max\{ \max_{j=1,j \neq i}^{n} \{ \chi(\|x_j\|_{{X}_j}) \}, \chi(\|u\|_U ) \}
\end{eqnarray*}
Therefore if $\|x_i\|_{X_i} \geq \chi(\|\tilde{x}_i\|_{\tilde{X}_i})$ holds, then also 
\[
V_i(x_{i})\geq\max\{ \max_{j=1}^{n}\chi_{ij}(V_{j}(x_{j})),\chi_{i}(\|u\|_{U})\}
\]
holds with 
\[
\chi_{ij}(r):= \psi_{i1}\left( \frac{1}{n+1} \chi( \psi^{-1}_{i2} (r) ) \right), \ 
\chi_{i}(r):= \psi_{i1}\left( \frac{1}{n+1} \chi( r ) \right), \ i \neq j,\ r \geq 0.
\]
And thus if \eqref{GainImplikation_iThSys} holds, then it holds also the implication
\begin{equation}
\label{GainImplikationNSyst}
V_i(x_{i})\geq\max\{ \max_{j=1}^{n}\chi_{ij}(V_{j}(x_{j})),\chi_{i}(\|u\|_{U})\} \ \Rightarrow\ \dot{V_{i}}(x_i)\leq-\alpha_{i}(V_{i}(x_{i})).
\end{equation}

The statement, that if \eqref{GainImplikationNSyst} holds, then so is \eqref{GainImplikation_iThSys} can be checked in the same way.

\begin{remark}
Note that we have used in our derivations the above norm on the space $\tilde{X}_i$. For finite-dimensional $\tilde{X}_i$ such derivations can be made for arbitrary norm in $\tilde{X}_i$ due to equivalence of the norms in a finite-dimensional space. However, for infinite-dimensional systems it is not always true.
\end{remark}

In the following we will use the implication form as in \eqref{GainImplikationNSyst} and assume, that for all $i=1,\ldots,n$ for Lyapunov function $V_i$ of the $i$-th system the gains $\chi_{ij}$, $j=1,\ldots,n$ and $\chi_i$ are given.


Gains $\chi_{ij}$ characterize the interconnection structure of
subsystems. 
Let us introduce the gain operator $\Gamma:\R_{+}^{n}\rightarrow\R_{+}^{n}$
defined by 
\begin{equation}
\label{operator_gamma}
\Gamma(s):=\left(\max_{j=1}^{n}\chi_{1j}(s_{j}),\ldots,\max_{j=1}^{n}\chi_{nj}(s_{j})\right),\ s\in\R_{+}^{n}.
\end{equation}

For arbitrary $x,y \in \R^n$ define the relations "$\geq$" and "$<$" on $\R^n$ by
\[
x \geq y \quad \Iff \quad   x_i \geq y_i, \; \forall i=1,\ldots,n,
\]
\[
x < y \quad \Iff \quad    x_i < y_i,  \forall i=1,\ldots,n.
\]

We recall the notion of $\Omega$-path (see \cite{DRW10,Rue10}), useful for investigation of stability of interconnected systems and for construction of a Lyapunov function of the whole interconnection.
\begin{Def} 
A function $\sigma=(\sigma_{1},\dots,\sigma_{n})^{T}:\R_{+}^{n}\rightarrow\R_{+}^{n}$,
where $\sigma_{i}\in\K_{\infty}$, $i=1,\ldots,n$ is called an \textit{$\Omega$-path},
if it possesses the following properties:
\begin{enumerate}
	\item $\sigma_{i}^{-1}$ is locally Lipschitz continuous on $(0,\infty)$;
	\item for every compact set $P\subset(0,\infty)$ there are finite
constants $0<K_{1}<K_{2}$ such that for all points of differentiability
of $\sigma_{i}^{-1}$ we have 
\begin{align*}
0<K_{1}\leq(\sigma_{i}^{-1})'(r)\leq K_{2},\quad\forall r\in P ;
\end{align*}
\item 
\begin{align}
\label{sigma cond 2}
\Gamma(\sigma(r))<\sigma(r),\ \forall r>0.
\end{align}
\end{enumerate}
\end{Def}

\begin{remark}
Note, that for our purposes \eqref{sigma cond 2} can be weakened to 
\begin{align}
\label{sigma cond 3}
\Gamma(\sigma(r)) \leq \sigma(r),\ \forall r>0.
\end{align}

\end{remark}

If operator $\Gamma$ satisfies the small-gain condition, namely for all $\forall\ s\in\R_{+}^{n}\backslash\left\{ 0\right\}$ it holds
\begin{align}
\label{smallgaincondition}
\Gamma(s)\not\geq s \quad \Iff \quad \exists i:  (\Gamma(s))_i < s_i,
\end{align}
then an $\Omega$-path can be constructed as follows (see \cite{KaJ11}, Proposition 2.7 and Remark 2.8):
\begin{equation}
\label{OmegaPfad}
\sigma(t)=Q(at), \forall t \geq 0,\ \mbox{ for some } a \in int(\R^n_+),
\end{equation}
where $Q:\R^n_+ \to \R^n_+$ is defined by
\[
Q(x)=MAX\{x,\Gamma(x), \Gamma^2(x),\ldots, \Gamma^{n-1}(x)\},
\]
with $\Gamma^n(x)=\Gamma \circ \Gamma^{n-1}(x)$, for all $n \geq 2$.
The function $MAX$ for all $u_i \in \R^n$, $i=1,\ldots,m$ is defined by
\[
z=MAX\{u_1,\ldots,u_m\} \in \R^n,\quad z_i=\max\{u_{1i},\ldots,u_{mi}\}.
\]

Note that $\Omega$-path \eqref{OmegaPfad} is only Lipschitz continuous, but with the help of standard mollification arguments (see, \cite{Gru02b}, Appendix B.2 or \cite{Rue07}, Lemma 1.1.6) it can be made smooth. 

%

Now we can state a theorem, that provides sufficient conditions
for a network, consisting of $n$ ISS subsystems to be ISS. 

\begin{Satz}
\label{thm:main1} 
Let for each subsystem of \eqref{Kopplung_N_Systeme}
$V_{i}$ be the ISS-Lyapunov function with corresponding gains $\chi_{ij}$.
If the corresponding operator $\Gamma$ defined by \eqref{operator_gamma}
satisfies the small-gain condition \eqref{smallgaincondition}, then
the whole system \eqref{KopplungHauptSys} is ISS and possesses ISS-Lyapunov
function defined by 
\begin{align}
\label{NeuLyapFun}
V(x):=\max_{i}\{\sigma_{i}^{-1}(V_{i}(x_{i}))\},
\end{align}
 where $\sigma=(\sigma_{1},\ldots,\sigma_{n})^{T}$ is an $\Omega$-path. The Lyapunov gain of the whole system is 
\[
\chi(r):=\max_{i}\sigma_{i}^{-1}(\chi_{i}(r)).
\]
\end{Satz}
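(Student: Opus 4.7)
The plan is to verify that $V$ defined by \eqref{NeuLyapFun} is an ISS-Lyapunov function for the interconnection \eqref{KopplungHauptSys} in the sense of Definition \ref{DefLISS_LF} (reformulated via Proposition \ref{LISS_LF_PC}, since $U_c = PC(\R_+,U)$) with gain $\chi$ as in the statement, and then to invoke Theorem \ref{LyapunovTheorem} to conclude ISS. The sandwich estimate \eqref{LyapFunk_1Eig} for $V$ is immediate: composing $\psi_{i1}(\|x_i\|_{X_i}) \leq V_i(x_i) \leq \psi_{i2}(\|x_i\|_{X_i})$ with the $\Kinf$-functions $\sigma_i^{-1}$ and using $\|x\|_X = \sum_{i} \|x_i\|_{X_i}$, one assembles $\psi_1,\psi_2 \in \Kinf$ with $\psi_1(\|x\|_X) \leq V(x) \leq \psi_2(\|x\|_X)$.

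The algebraic heart of the argument shows that whenever $V(x) \geq \chi(\|\xi\|_U)$, the hypothesis of \eqref{GainImplikationNSyst} is triggered for every index $i_0$ achieving the maximum defining $V(x)$. For such $i_0$ one has $V_{i_0}(x_{i_0}) = \sigma_{i_0}(V(x))$, so $V(x) \geq \sigma_{i_0}^{-1}(\chi_{i_0}(\|\xi\|_U))$ gives $V_{i_0}(x_{i_0}) \geq \chi_{i_0}(\|\xi\|_U)$, and for every $j$ the bound $V_j(x_j) \leq \sigma_j(V(x))$ combined with the $\Omega$-path property \eqref{sigma cond 2} yields
\[
\max_{j=1,\ldots,n} \chi_{i_0 j}(V_j(x_j)) \leq \max_j \chi_{i_0 j}(\sigma_j(V(x))) = \bigl(\Gamma(\sigma(V(x)))\bigr)_{i_0} < \sigma_{i_0}(V(x)) = V_{i_0}(x_{i_0}).
\]
Applying \eqref{GainImplikationNSyst} to subsystem $i_0$ with the input $\tilde v_{i_0}$ induced by the remaining state components and $u$ gives $\dot V_{i_0}(x_{i_0}) \leq -\alpha_{i_0}(V_{i_0}(x_{i_0}))$.

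It remains to lift these subsystem bounds to a dissipation estimate for $V$. By continuity of trajectories and the strict inequality $\sigma_j^{-1}(V_j(x_j)) < V(x)$ for $j$ outside the set of active indices at $t=0$, on a small interval $[0,\delta]$ the maximum defining $V(\phi(t,x,u))$ is attained only at active indices. Splitting the difference quotient for $\dot V(x)$ along this max, passing $\mathop{\overline{\lim}}_{t\to+0}$ inside, and applying the chain rule at points of differentiability of each $\sigma_{i_0}^{-1}$, one obtains
\[
\dot V(x) \leq \max_{i_0 \text{ active}} (\sigma_{i_0}^{-1})'(V_{i_0}(x_{i_0}))\,\dot V_{i_0}(x_{i_0}) \leq -K_1 \min_{i_0 \text{ active}} \alpha_{i_0}(V_{i_0}(x_{i_0})),
\]
where $K_1 > 0$ is the uniform lower bound on $(\sigma_{i_0}^{-1})'$ over the relevant compact interval from the $\Omega$-path definition. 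Since $V_{i_0}(x_{i_0}) = \sigma_{i_0}(V(x))$ for active $i_0$, the right-hand side is bounded above by $-\alpha(V(x))$ for a suitable positive definite $\alpha$, giving \eqref{GainImplikation}.

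The main obstacle is exactly this last step: justifying the upper-Dini-derivative identity for the maximum of the merely locally Lipschitz functions $\sigma_i^{-1}\circ V_i$ along a trajectory of an abstract control system. The cleanest workaround is to smoothen the $\Omega$-path via the mollification argument mentioned after \eqref{OmegaPfad}, which makes each $\sigma_i^{-1}$ continuously differentiable with two-sided derivative bounds on compacta; the chain rule then applies directly and the max-of-Dini-derivatives identity becomes routine.
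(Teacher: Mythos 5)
Your proposal is correct and follows essentially the same route as the paper's proof: verify the sandwich bounds, show that $V(x)\geq\chi(\|\xi\|_U)$ triggers the hypothesis of \eqref{GainImplikationNSyst} at every maximizing index via the $\Omega$-path inequality, and then handle the Dini derivative of the maximum by restricting to active indices and invoking the limsup-of-max identity (the paper's Lemma \ref{LIM_MAX_COR}) together with smoothness of $\sigma_i^{-1}$ obtained by mollification. Your treatment of a general set of active indices with $V_j(x_j)\leq\sigma_j(V(x))$ is a marginally cleaner unification of the paper's separate cases $x\in M_i$ and $x\in\cap_{i\in I(x)}\partial M_i$, but it is not a different argument.
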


For the proof we use the following standard fact from analysis
\begin{Lemm}
\label{LIM_MAX_COR}
Let $f_i:\R \to \R$ are defined and bounded in some neighborhood $D$ of $t=0$. Then it holds
\begin{equation}
\label{BeideRichtungen_Cor}
\overline{\lim_{t \to 0}} \max_{1 \leq i \leq m} \{f_i(t) \} = 
 \max_{1 \leq i \leq m} \{\overline{\lim_{t \to 0}} f_i(t) \}
\end{equation}
\end{Lemm}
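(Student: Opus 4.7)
The plan is to prove the two inequalities $\leq$ and $\geq$ separately, since equality of limsups can usually be decomposed this way and here one direction is essentially trivial while the other hides the real content of the lemma.

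First I would dispatch the easy direction $\max_{i}\{\overline{\lim}_{t\to 0} f_i(t)\} \leq \overline{\lim}_{t \to 0} \max_i\{f_i(t)\}$. For any fixed index $j \in \{1,\ldots,m\}$ and all $t \in D$ one has $f_j(t) \leq \max_i f_i(t)$, and limsup is monotone, so $\overline{\lim}_{t\to 0} f_j(t) \leq \overline{\lim}_{t\to 0} \max_i f_i(t)$. Taking the maximum over $j$ on the left yields this half.

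The harder direction $\overline{\lim}_{t \to 0} \max_i\{f_i(t)\} \leq \max_i\{\overline{\lim}_{t\to 0} f_i(t)\}$ is the main obstacle, because the maximum selects different indices for different $t$, so we cannot just push the $\overline{\lim}$ past it. The key idea is finiteness of the index set combined with a pigeonhole argument. I would pick a sequence $t_k \to 0$ realizing the limsup on the left, i.e. $\max_i f_i(t_k) \to L := \overline{\lim}_{t \to 0} \max_i f_i(t)$ (this is possible because the $f_i$ are bounded near $0$, so $L$ is finite). For each $k$ choose $i_k \in \{1,\ldots,m\}$ with $f_{i_k}(t_k) = \max_i f_i(t_k)$. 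Since only finitely many indices are available, some $i^* \in \{1,\ldots,m\}$ occurs infinitely often; extracting the corresponding subsequence $t_{k_\ell}$ gives $f_{i^*}(t_{k_\ell}) \to L$, hence
\[
L \;=\; \lim_{\ell \to \infty} f_{i^*}(t_{k_\ell}) \;\leq\; \overline{\lim}_{t\to 0} f_{i^*}(t) \;\leq\; \max_{1 \leq i \leq m}\bigl\{\overline{\lim}_{t\to 0} f_i(t)\bigr\}.
\]
Combining both inequalities proves the identity. The finiteness of $m$ is essential: without it the pigeonhole step fails, which is consistent with the fact that supremum and limsup do not commute in general.
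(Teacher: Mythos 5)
Your proof is correct. Note that the paper itself states this lemma ``without a proof'' as a standard fact from analysis, so there is no argument of the authors' to compare yours against; your write-up supplies exactly the missing justification. The decomposition into the two inequalities is the right one: the direction $\max_i \overline{\lim} f_i \leq \overline{\lim}\max_i f_i$ follows from monotonicity of the limit superior, and the reverse direction genuinely needs the finiteness of the index set, which your pigeonhole extraction of a constant subsequence of maximizing indices $i_k$ exploits correctly. The boundedness hypothesis is used exactly where you use it, to guarantee that $L$ is finite and that a sequence $t_k \to 0$ (one should take $t_k \neq 0$, and in the paper's actual application the limit is one-sided, $t \to +0$, which changes nothing in the argument) realizing the limit superior exists. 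Your closing remark that the identity fails for infinite index sets, where only $\sup_i \overline{\lim} f_i \leq \overline{\lim}\sup_i f_i$ survives, correctly identifies why the lemma is stated for finitely many functions.
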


The idea of the proof is taken from \cite{DRW06b}.

\begin{proof}
In order to prove that $V$ is a Lyapunov function it is suitable to divide its domain of definition into subsets on which $V$ takes a simpler form. Thus, for all $i\in \{1,\ldots,n\}$ define the set
\[
M_{i}=\left\{ x\in X:\sigma_{i}^{-1}(V_{i}(x_{i})) > \sigma_{j}^{-1}(V_{j}(x_{j})),\,\,\forall j=1,\ldots,n,\ j\neq i\right\}.
\]
From the continuity of $V_i$ and $\sigma_{i}^{-1}$, $i=1,\ldots,n$ it follows that all $M_i$ are open. Also note that $X=\cup_{i=1}^{n}\overline{M}_{i}$ and for all $i \neq j$ holds $M_i \cap M_j=    \emptyset$.
Define 
\[
\gamma(r):=\max_{j=1}^{n}\sigma_{j}^{-1}\circ\gamma_{j}(r).
\]
Take some $i\in \{1,\ldots,n\}$ and pick any $x\in M_{i}$. Assume that $V(x)\geq\gamma(\|\xi\|_U)$ holds. 
Then we obtain
\[
\sigma_{i}^{-1}(V_{i}(x_{i}))=V(x)\geq\gamma(\|\xi\|_U)=\max_{j=1}^{n}\sigma_{j}^{-1}\circ\gamma_{j}(\|\xi\|_U)\geq\sigma_{i}^{-1}(\gamma_{i}(\|\xi\|_U)).
\]
But $\sigma_{i}^{-1} \in \Kinf$, hence it holds
\begin{eqnarray}
\label{eq:GainAbschaetzung}
V_{i}(x_{i}) & \geq & \gamma_{i}(\|\xi\|_U).
\end{eqnarray}
On the other hand, from the condition \eqref{sigma cond 3} we obtain that
\begin{eqnarray*}
V_{i}(x_{i})=\sigma_{i}(V(x))    &\geq&    \max_{j=1}^{n}\chi_{ij}\left(\sigma_{j}\left(V\left(x\right)\right)\right)=\max_{j=1}^{n}\chi_{ij}\left(\sigma_{j}\left(\sigma_{i}^{-1}\left(V_i\left(x_{i}\right)\right)\right)\right) \\
    &>&     \max_{j=1}^{n}\chi_{ij}\left(\sigma_{j}\left(\sigma_{j}^{-1}\left(V_j\left(x_{j}\right)\right)\right)\right)=\max_{j=1}^{n}\chi_{ij}\left(V_j\left(x_{j}\right)\right).
\end{eqnarray*}
Combining it with \eqref{eq:GainAbschaetzung} we obtain
\begin{eqnarray}
\label{eq:LyapImplikation}
V_{i}(x_{i})  \geq  \max\left\{ \max_{j=1}^n \chi_{ij}\left(V_j\left(x_{j}\right)\right),\gamma_{i}(\|\xi\|_U)\right\}.
\end{eqnarray}

Hence condition \eqref{GainImplikationNSyst} implies that for all $x$ the following estimate holds
\begin{eqnarray*}
\frac{d}{dt}V(x) &=& \frac{d}{dt} ( \sigma_{i}^{-1}(V_i(x_i)) )=
\left(\sigma_{i}^{-1}\right)^{\prime}(V_{i}(x_{i}))\frac{d}{dt}V_{i}(x_{i}(t)) \\
&\leq& -\left(\sigma_{i}^{-1}\right)^{\prime}(V_{i}(x_{i}))\alpha_{i}(V_{i}(x_{i}))
=-\left(\sigma_{i}^{-1}\right)^{\prime}(\sigma_{i}(V(x)))\alpha_{i}(\sigma_{i}(V(x))).
\end{eqnarray*}
We set
\[
\alpha(r):=\min_{i=1}^{n} \left\{\left(\sigma_{i}^{-1}\right)^{\prime}(\sigma_{i}(r))\alpha_{i}(\sigma_{i}(r)) \right\}.
\]
Function $\alpha$ is positive definite, because $\sigma_{i}^{-1} \in \Kinf$ and all $\alpha_i$ are positive definite functions.
Overall, for all $x\in \cup_{i=1}^{n}{M}_{i}$ holds
\[
\frac{d}{dt}V(x) \leq -\min_{i=1}^{n} \left(\sigma_{i}^{-1}\right)^{\prime}(\sigma_{i}(V(x)))\alpha_{i}(\sigma_{i}(V(x))) = - \alpha(V(x)).
\]

Now let $x \notin \cup_{i=1}^{n}{M}_{i}$. From $X=\cup_{i=1}^{n}{\overline{M}}_{i}$ it follows that $x \in \cap_{i\in I(x)}\partial {M}_{i}$  for some index set $I(x)\subset\left\{ 1,\ldots,n\right\}$, $|I(x)| \geq 2$.
\[
\cap_{i\in I(x)}\partial {M}_{i} = \{ x \in X: \forall i \in I(x),\ \forall j \notin I(x)\  \sigma_{i}^{-1}(V_{i}(x_{i})) > \sigma_{j}^{-1}(V_{j}(x_{j})),
\]
\[
 \forall i,j \in I(x) \ \sigma_{i}^{-1}(V_{i}(x_{i})) = \sigma_{j}^{-1}(V_{j}(x_{j})) \}. 
\]

Due to continuity of $\phi$ we have, that for all $u \in PC(\R_+,U)$, $u(0)=\xi$ there exists $t^*>0$, such that for all $t \in [0, t^*)$ it follows $\phi(t,x,u) \in \left( \cap_{i\in I(x)} \partial {M}_{i} \right) \cup \left( \cup_{i\in I(x)}{M}_{i} \right)$.



Then, by definition of the derivative we obtain
\begin{eqnarray}
\label{HauptAbschaetzung}
\dot{V}(x) &=& \mathop{\overline{\lim}}\limits _{t\rightarrow+0}\frac{1}{t}(V(\phi(t,x,u)))-V(x)) \\
&=&
\mathop{\overline{\lim}}\limits _{t\rightarrow+0}\frac{1}{t} \left(\max_{i \in I(x)}\{\sigma_{i}^{-1}(V_{i}(\phi_i(t,x,u)))\}-\max_{i \in I(x)}\{\sigma_{i}^{-1}(V_{i}(x_{i}))\} \right)
\end{eqnarray}

From the definition of $I(x)$ it follows that
\[
\sigma_i^{-1}(V_{i}(x_{i}))=\sigma_j^{-1}(V_{j}(x_{j})) \quad \forall i,j\in I(x),
\]
and therefore the index $i$, on which the maximum in $\max_{i \in I(x)}\{\sigma_{i}^{-1}(V_{i}(x_{i}))\}$ is reached, may be always set equal to the index on which the maximum 
$\max_{i \in I(x)}\{\sigma_{i}^{-1}(V_{i}(\phi_i(t,x,u)))\}$ is reached. \\
We continue estimates \eqref{HauptAbschaetzung}
\[
\dot{V}(x)=\mathop{\overline{\lim}}\limits _{t\rightarrow+0}\max_{i \in I(x)}\{\frac{1}{t}\left(\sigma_{i}^{-1}(V_{i}(\phi_i(t,x,u)))-\sigma_{i}^{-1}(V_{i}(x_{i}))\right) \}
\]
Using Lemma \ref{LIM_MAX_COR} we obtain
\[
\dot{V}(x)= \max_{i \in I(x)}\{   \mathop{\overline{\lim}} \limits _{t\rightarrow+0}\frac{1}{t}\left(\sigma_{i}^{-1}(V_{i}(\phi_i(t,x,u)))-\sigma_{i}^{-1}(V_{i}(x_{i}))\right)\}.
\]
Overall, we have that for all $x \in X$ holds
\[
\frac{d}{dt}V(x)=\max_{i}\{\left(\sigma_{i}^{-1}\right)^{\prime}(V_{i}(x_{i}))\frac{d}{dt}V_{i}(x_{i}(t))\}\leq-\alpha(V(x)),
\]
and the theorem is proved for all $x \in X$. \hfill $\blacksquare$
\end{proof}

\begin{remark}
In the recent paper \cite{KaJ11} it was proved a general vector small-gain theorem, that states roughly speaking that if an abstract control system possesses a vector ISS Lyapunov function, then it is ISS. The authors have also shown how from this theorem the small-gain theorems for interconnected systems of ODEs and retarded equations can be derived. It is possible, that the small-gain theorem, similar to the proved in this section, can be derived from the general theorem from \cite{KaJ11}. However,
it seems, that the constructions in \cite{KaJ11} can be provided only for maximum formulation of ISS-Lyapunov functions (as in \eqref{GainImplikationNSyst}). If the subsystems possess ISS-Lyapunov functions in terms of summations, i.e. instead of \eqref{GainImplikationNSyst} one has 
\begin{equation}
\label{GainImplikationNSyst_Sums}
V_i(x_{i})\geq \sum_{j=1}^{n} \chi_{ij}(V_{j}(x_{j})) + \chi_{i}(\|u\|_{U}) \ \Rightarrow\ \dot{V_{i}}(x_i)\leq-\alpha_{i}(V_{i}(x_{i})),
\end{equation}
then it is not clear, how the proofs from \cite{KaJ11} can be adapted for this case. In contrast to it, the counterpart of the Theorem~\ref{thm:main1} in the summation case can be proved with the method, similar to the used in the proof of the Theorem~\ref{thm:main1}, see \cite{DRW10}. However, the small-gain condition will have slightly another form. 
\end{remark}


\subsection{Interconnections of linear systems}

The construction of ISS-Lyapunov function for the interconnections of finite-dimensional input-to-state stable linear systems (see \cite{DRW10}) can be generalized to the case of interconnections of linear systems over Banach spaces.

Let $X_i$, $i=1,\ldots,n$ be Banach spaces. Consider $n$ systems of the form
\begin{align}
\label{StabileSysteme}
\dot{x}_i=A_i x_i(t),\ i=1,\ldots, n,	
\end{align}

where $x_i(t) \in X_i$, $A_i:X_i \to X_i$ is a generator of an analytic semigroup over $X_i$ defined on $D(A_i) \subset X_i$.

Assume that all systems \eqref{StabileSysteme} are {\UGx} and consider the following interconnection
\begin{align}
\label{Kopplung_nSysteme}
\dot{x}_i=A_i x_i(t) + \sum_{j=1}^n {B_{ij}x_j(t)} + C_i u(t),\ i=1,\ldots, n,	
\end{align}
where $B_{ij} \in L(X_j,X_i)$, $i,j \in \{1,\ldots,n\}$ are bounded operators, $u \in U_c=PC(\R_+,U)$ for some Banach space of input values $U$.
We assume, that $B_{ii} = 0$, $i=1,\ldots,n$. Otherwise we can always substitute 
$\tilde{A}_i=A_i + B_{ii}$.

Let us denote $X=X_1 \times \ldots \times X_n$ and introduce the matrix operators $A=diag(A_1,\ldots,A_n):X \to X$, $B=(B_{ij})_{i,j=1,\ldots, n}:X \to X$ and $C=(C_1,\ldots,C_n): U \to X$. Then 
the system \eqref{Kopplung_nSysteme} can be rewritten in the following form
\begin{align}
\label{GrossSystem}
\dot{x}(t)=(A + B)x(t) + Cu(t).
\end{align}

Now we apply Lyapunov technique developed in this section to the system \eqref{Kopplung_nSysteme}.
From Theorem \ref{HauptLinSatz} and Lemma \ref{HilfsAequivalenzen} we have, that $i$-th subsystem of \eqref{Kopplung_nSysteme} is ISS iff the analytic semigroup generated by $A_i$ is exponentially stable. This is equivalent (see 
\cite[Theorem 5.1.3]{CuZ95}) 
to existence of a positive operator $P_i$, for which it holds that
\begin{equation}
\label{OperatorBedingung_ith_Syst}
\lel A_i x_i ,P_i x_i \rir + \lel P_i x_i,A_i x_i \rir \leq -\|x_i\|^2_{X_i}, \quad \forall x_i \in D(A_i).
\end{equation}


Consider a function $V_i$ defined by 
\begin{equation}
\label{LF_ith_Subsyst}
V_i(x_i)=\lel P_i x_i,x_i \rir, x_i \in X_i.
\end{equation}
We assume in what follows that $P_i$ is a coercive operator. 
This implies that
\begin{equation}
\label{ErsteUngl_LF}
a_i^2 \|x_i\|^2_{X_i} \leq V_i(x_i) \leq \|P_i\| \|x_i\|^2_{X_i},
\end{equation}
for some $a_i>0$.

%
%

Differentiating $V_i$ w.r.t. the $i$-th subsystem of \eqref{Kopplung_nSysteme}, we obtain for all $x_i \in D(A_i)$
\[
\dot{V}_i(x_i) = \lel P_i \dot{x}_i,x_i \rir + \lel P_i x_i,\dot{x}_i \rir \leq
\]
\[
(\lel P_i A_i {x}_i,x_i \rir + \lel P_i x_i, A_i x_i \rir ) +
2 \|x_i\|_{X_i} \|P_i\| \left( \sum_{i \neq j} \|B_{ij}\| \|x_j\|_{X_j}  + \|C_i\| \|u\|_U \right). 
\]
Operator $P_i$ is self-adjoint, hence it holds $\lel P_i A_i {x}_i,x_i \rir = \lel A_i {x}_i, P_i x_i \rir$ and by equality \eqref{OperatorBedingung_ith_Syst} we obtain
\[
\dot{V}_i(x_i) \leq  -\|x_i\|^2_{X_i} + 
2 \|x_i\|_{X_i} \|P_i\| \left( \sum_{i \neq j} \|B_{ij}\| \|x_j\|_{X_j}  + \|C_i\| \|u\|_U \right).
\]

Now take $\varepsilon \in (0,1)$ and let
\begin{equation}
\label{Gains_Wahl}
\|x_i\|_{X_i} \geq \frac{2 \|P_i\|}{1-\varepsilon} \left( \sum_{i \neq j} \|B_{ij}\| \|x_j\|_{X_j}  + \|C_i\| \|u\|_U \right).
\end{equation}
Then we obtain for all $x_i \in D(A_i)$
\[
\dot{V}_i(x_i) \leq  - \varepsilon  \|x_i\|^2_{X_i}.
\]
To verify this inequality for all $x_i \in X_i$ we use the same argument, as in the end of the proof of the Theorem \ref{LinearisationSatz} (here we use analyticity of a semigroup).

This proves, that $V_i$ is an ISS-Lyapunov function for $i$-th subsystem.
The condition \eqref{Gains_Wahl} can be easily transformed to the form \eqref{GainImplikationNSyst}, which is needed in order to apply the small-gain theorem. 

%
%
%


As a particular example consider the following system of interconnected linear reaction-diffusion equations
\begin{equation}
\label{GekoppelteLinSyst}
\left
\{
\begin {array} {l} 
{\frac{\partial s_1}{\partial t} = c_1 \frac{\partial^2 s_1}{\partial x^2} + a_{12} s_2 , \quad x \in (0,d),\ t>0,}\\
{s_1(0,t) = s_1(d,t)=0;} \\
{\frac{\partial s_2}{\partial t} = c_2 \frac{\partial^2 s_2}{\partial x^2} + a_{21} s_1 , \quad x \in (0,d),\ t>0,}\\
{s_2(0,t) = s_2(d,t)=0.} 
\end {array}
\right.	
\end{equation}
Here $c_1$ and $c_2$ are positive constants.

We choose the state space as $X_1=X_2=L_2(0,d)$.
The operators $A_i= c_i \frac{d^2}{dx^2}$ with $D(A_i)=H^1_0(0,d) \cap H^2(0,d)$, $i=1,2$ are generators of the analytic semigroups for the corresponding subsystems.

Both subsystems are ISS, moreover, $Spec(A_i) = \{ - c_i \left( \frac{\pi n}{d} \right)^2 | \ n =1,2,\ldots \}$, $i=1,2$.

Take $P_i = \frac{1}{2c_i} \left( \frac{d}{\pi} \right)^2 I$, where $I$ is the identity operator on $X_i$.
We have
\begin{eqnarray*}
\lel A_i s ,P_i s \rir + \lel P_i s,A_i s \rir &=& \frac{1}{c_i} \left(\frac{d}{\pi} \right)^2 \lel A_is, s \rir \\
   &= &  \left( \frac{d}{\pi} \right)^2    \int_0^d \frac{\partial^2 s}{\partial x^2} s dx  
		=  - \left( \frac{d}{\pi} \right)^2    \int_0^d  \left( \frac{\partial s}{\partial x} \right)^2 dx \\   &\leq & - \|s\|^2_{L_2(0,d)}. 
\end{eqnarray*}
In the last estimate we have used the Friedrichs' inequality (see p. 67 in \cite{MPF91}).
The Lyapunov functions for subsystems are defined by 
\[
V_i(s_i) = \lel P_i s_i,s_i \rir = \frac{1}{2c_i} \left( \frac{d}{\pi} \right)^2 \|s_i\|^2_{L_2(0,d)}, \mbox{ for } s_i \in X_i. 
\]
We have the following estimates for derivatives
\[
\dot{V}_1(s_1)  \leq -\|s_1\|^2_{L_2(0,d)} + \frac{1}{c_1}\left( \frac{d}{\pi} \right)^2 |a_{12}|  \|s_1\|_{L_2(0,d)}\|s_2\|_{L_2(0,d)},
\]
\[
\dot{V}_2(s_2) \leq -\|s_2\|^2_{L_2(0,d)} + \frac{1}{c_2}\left( \frac{d}{\pi} \right)^2 |a_{21}|  \|s_1\|_{L_2(0,d)}\|s_2\|_{L_2(0,d)}.
\]
We choose the gains in the following way
\[
\gamma_{12}(r)=\frac{c_2}{c_1^3}\left(\frac{d}{\pi} \right)^4 \left|\frac{a_{12}}{1-\varepsilon}\right|^2 \cdot r,
\quad
\gamma_{21}(r)=\frac{c_1}{c_2^3} \left( \frac{d}{\pi} \right)^4 \left|\frac{a_{21}}{1-\varepsilon}\right|^2 \cdot r.
\]
We have
\begin{eqnarray*}
V_1(s_1) \geq \gamma_{12} \circ V_2(s_2) &\Iff& \sqrt{ \frac{c_1}{c_2} \gamma_{12}(1) }\|s_2\|_{L_2(0,d)} \leq \|s_1\|_{L_2(0,d)}  \\
& \Iff & \frac{1}{c_1}\left( \frac{d}{\pi} \right)^2 |a_{12}| \|s_2\|_{L_2(0,d)} \leq
(1-\eps)\|s_1\|_{L_2(0,d)}.
\end{eqnarray*}
Analogously,
\begin{eqnarray*}
V_2(s_2) \geq \gamma_{21} \circ V_1(s_1) \Iff \frac{1}{c_2}\left( \frac{d}{\pi} \right)^2 |a_{21}| \|s_1\|_{L_2(0,d)} \leq (1-\eps)\|s_2\|_{L_2(0,d)}.
\end{eqnarray*}
We have the following implications:
\[
V_1(s_1) \geq \gamma_{12} \circ V_2(s_2) \quad \Rightarrow \quad \dot{V}_1(s_1) \leq - \varepsilon \|s_1\|^2_{L_2(0,d)},
\]
\[
V_2(s_2) \geq \gamma_{21} \circ V_1(s_1) \quad \Rightarrow \quad \dot{V}_2(s_2)  \leq  - \varepsilon \|s_2\|^2_{L_2(0,d)}.
\]

The small-gain condition for the case of two interconnected systems can be equivalently written as $\gamma_{12} \circ \gamma_{21} < \Id$ (see \cite{DRW07}, p. 108).
\[
\gamma_{12} \circ \gamma_{21} < \Id \quad  \Iff \quad 
\frac{1}{c_1^2 c_2^2}\left( \frac{d}{\pi} \right)^8 \frac{\left|a_{12} a_{21} \right|^2}{(1-\varepsilon)^4} <1,
\]
for arbitrary $\eps>0$. Thus, if
\begin{equation}
\label{StabBedin_LinDiff}
|a_{12} a_{21}| <c_1c_2\left( \frac{\pi}{d} \right)^4
\end{equation}
is satisfied, then the whole system \eqref{GekoppelteLinSyst} is 0-UGAS$x$.

\subsection{Nonlinear example}

Let us show the applicability of our small-gain theorem to nonlinear systems.

\begin{equation}
\label{GekoppelteNonLinSyst}
\left
\{
\begin {array} {l} 
{\frac{\partial s_1}{\partial t} = c_1 \frac{\partial^2 s_1}{\partial x^2} + s^2_2 , \quad x \in (0,d),\ t>0,}\\
{s_1(0,t) = s_1(d,t)=0;} \\
{\frac{\partial s_2}{\partial t} = c_2 \frac{\partial^2 s_2}{\partial x^2} -b s_2+ \sqrt{|s_1|} , \quad x \in (0,d),\ t>0,}\\
{s_2(0,t) = s_2(d,t)=0.} 
\end {array}
\right.	
\end{equation}
We assume, that $c_1,c_2,b$ are positive constants.

Thus, we choose the state space and space of input values for the first subsystem as $X_1=L_2(0,d)$, $U_1=L_4(0,d)$ and for the second subsystem as $X_2=L_4(0,d)$, $U_2=L_2(0,d)$. The state of the whole system \eqref{GekoppelteNonLinSyst} is denoted by $X=X_1 \times X_2$.

Define operators $B_i=c_i \frac{d^2 }{dx^2}$. These operators (together with Dirichlet boundary conditions) generate an analytic semigroup on $L_2(0,d)$ and $L_4(0,d)$ respectively (see, e.g. \cite[Chapter 7]{Paz83}).

For both subsystems take the set of input functions as $U_{c,i}:=C([0,\infty),U_i)$.
We consider the mild solutions of the subsystems, i.e. the solutions $s_i$, given by the formula \eqref{LinEq_IntegralForm}. 

Note that $s_2 \in C([0,\infty),L_4(0,d))$ $\Iff$ $s^2_2 \in C([0,\infty),L_2(0,d))$ and
$s_1 \in C([0,\infty),L_2(0,d))$ $\Iff$ $\sqrt{s_1} \in C([0,\infty),L_4(0,d))$.

Under made assumptions the solution of the first subsystem (when $s_2$ is treated as input) belongs to $C([0,\infty),H^1_0(0,d) \cap H^2(0,d)) \subset C([0,\infty),L_2(0,d))$ and the solution of the second one belongs to \\
$C([0,\infty),W^{4,1}_0(0,d) \cap W^{4,2}(0,d)) \subset C([0,\infty),L_4(0,d))$.
This implies, that the solution of the whole system is from the space $C([0,T],X)$ for all $T$ such that the solution of the whole system exists on $[0,T]$. The existence and uniqueness of the solution for all times will be proved for the values of parameters which establish ISS of the whole system, since this excludes the possibility of the blow-up phenomena.


%
%

Both subsystems of \eqref{GekoppelteNonLinSyst} are ISS. We choose $V_i$, $i=1,2$ defined by
\[
V_1(s_1)=\int_0^d{s_1^2(x)dx} = \|s_1\|^2_{L_2(0,d)}, \quad
V_2(s_2)=\int_0^d{s_2^4(x)dx} = \|s_2\|^4_{L_4(0,d)}
\]
as ISS-Lyapunov functions for $i$-th subsystem.

Consider the Lie derivative of $V_1$:
\begin{eqnarray*}
\frac{d}{dt}V_1(s_1) &=& 2 \int_0^d{s_1(x,t) \left(c_1 \frac{\partial^2 s_1}{\partial x^2}(x,t) + s^2_2(x,t) \right)dx} \\
&\leq& 
-2c_1 \left\|\frac{ds_1}{dx} \right\|^2_{L_2(0,d)} + 2\|s_1\|_{L_2(0,d)} \|s_2\|^2_{L_4(0,d)}.
\end{eqnarray*}
In the last estimation we have used the Cauchy-Schwarz inequality. By the Friedrichs' inequality, we obtain the estimation
\begin{eqnarray*}
\frac{d}{dt}V_1(s_1) & \leq & -2c_1 \left( \frac{\pi}{d} \right)^2 \|s_1\|^2_{L_2(0,d)} + 2\|s_1\|_{L_2(0,d)} \|s_2\|^2_{L_4(0,d)} \\
&= &
-2c_1 \left( \frac{\pi}{d} \right)^2 V_1(s_1) + 2 \sqrt{V_1(s_1)} \sqrt{V_2(s_2)}.
\end{eqnarray*}
Take
\[
\chi_{12}(r)= \frac{1}{c^2_1 \left( \frac{\pi}{d} \right)^4 (1-\eps_1)^2} r,\; \forall 
r>0,
\]
where $\eps_1 \in (0,1)$ is an arbitrary constant.
We obtain 
\[
V_1(s_1) \geq \chi_{12}(V_2(s_2)) \quad \Rightarrow \quad \frac{d}{dt}V_1(s_1) \leq 
-2\eps_1 c_1 \left( \frac{\pi}{d} \right)^2 V_1(s_1).
\]
Consider the Lie derivative of $V_2$:
\begin{eqnarray*}
\frac{d}{dt}V_2(s_2) &=& 4 \int_0^d{s_2^3(x,t)\left(c_2 \frac{\partial^2 s_2}{\partial x^2}(x,t) - b s_2(x,t) + \sqrt{|s_1(x,t)|}\right)dx} \\
&\leq&
- 12 c_2 \int_0^d{s_2^2 \left( \frac{\partial s_2}{\partial x} \right)^2dx} - 4 b V_2(s_2) +4 \int_0^d{ s_2^3(x,t) \sqrt{|s_1(x,t)|} dx}
\end{eqnarray*}
Applying for the last term the H\"older inequality we obtain
\[
\frac{d}{dt}V_2(s_2) \leq  
- 4b V_2(s_2) +4 (V_2(s_2))^{3/4}(V_1(s_1))^{1/4}.
\]
Let
\[
\chi_{21}(r)= \frac{1}{b^4 (1-\eps_2)^4} r,\; \forall r>0,
\]
where $\eps_2 \in (0,1)$ is an arbitrary constant. It holds the implication
\[
V_2(s_2) \geq \chi_{21}(V_1(s_1)) \quad \Rightarrow \quad 
\frac{d}{dt}V_2(s_2) \leq 
-4b\eps_2 V_2(s_2).
\]
The small-gain condition leads us to the following condition on parameters of the system
\[
\chi_{12} \circ \chi_{21} < \Id \quad  \Iff \quad 
c^2_1 \left( \frac{\pi}{d} \right)^4 (1-\eps_1)^2 b^4 (1-\eps_2)^4 > 1 
\quad  \Iff \quad 
c_1 \left( \frac{\pi}{d} \right)^2 b^2 >1.
\]
This condition guarantees that the system \eqref{GekoppelteNonLinSyst} is 0-UGAS$x$.

Note that the above stability condition doesn't involve the parameter $c_2$, and it provides good estimate for the stability region of the system if $c_2$ is small. Otherwise more precise analysis must be made.

\section{Conclusion}
\label{Schluss}

In this paper we have performed several steps towards generalization of the ISS theory to infinite-dimensional systems. The developed
framework which encompasses the ODE systems, systems with time-delays as well as many classes of evolution PDEs and is consistent with the original definitions of ISS for ODEs and time-delay systems.

In Section~\ref{LyapFunk} we have proved, that existence of an ISS-Lyapunov function implies the ISS property of a general control system and we have shown, how our definition of the ISS-Lyapunov function reduces
to the standard one in the case of finite-dimensional systems. For the systems, governed by differential equations in Banach spaces we established in Section~\ref{GekoppelteISS_Systeme} a small-gain theorem,
which provides us with a design of an ISS-Lyapunov function for an interconnection of ISS subsystems, provided the ISS-Lyapunov functions for the subsystems are known and a small-gain condition holds.

For constructions of local ISS-Lyapunov functions the linearization method has been proposed in Section~\ref{Linearisierung}, which is a good alternative to Lyapunov methods provided the system is linearizable.

Many interesting problems remain open.
For example, the most part of results in this work as well as in several other papers on ISS theory of infinite-dimensional systems \cite{MaP11}, \cite{PrM12}, \cite{KaJ11},
have been proved for either piecewise-continuous or continuous inputs. This can be quite restrictive for many applications, in particular, for PDEs and requires further research.

Another important problem is to prove (or disprove) the characterizations of ISS for infinite-dimensional systems analogous to the ones developed by Sontag and Wang in \cite{SoW95} and \cite{SoW96}
for finite-dimensional systems (with $X=\R^n$ and $U_c=L_{\infty}(\R_+,\R^m)$). The converse Lyapunov theorem is another desired fundamental theoretical result, which is beyond the scope of this paper.

\begin{acknowledgements}
This research is funded by the German Research Foundation (DFG) as a part of Collaborative Research Centre 637 "Autonomous Cooperating Logistic Processes - A Paradigm Shift and its Limitations". 

We thank anonymous reviewers for their comments and suggestions, which led to improvements in the presentation of results.
\end{acknowledgements}

%


%

\end{document}